\newtheorem{Thm}{Theorem} 
\newtheorem{Lem}[Thm]{Lemma}
\newtheorem{Prop}[Thm]{Proposition}
\newtheorem{Cor}[Thm]{Corollary}
\newtheorem{Prob}[Thm]{Problem}
\theoremstyle{definition}
\numberwithin{equation}{section}
\renewcommand{\phi}{\varphi}
\newcommand{\C}{\mathrm{C}}
\newcommand{\N}{\mathrm{N}}
\newcommand{\Z}{\mathrm{Z}}
\newcommand{\pcore}{\mathrm{O}}
\newcommand{\NN}{\mathbb{N}}
\newcommand{\Aut}{\mathrm{Aut}}
\newcommand{\Inn}{\mathrm{Inn}}
\newcommand{\Out}{\mathrm{Out}}
\newcommand{\GL}{\mathrm{GL}}
\newcommand{\SL}{\mathrm{SL}}
\newcommand{\Syl}{\operatorname{Syl}}
\title{On the converse of Gaschütz'\\ complement theorem}
\author{Benjamin Sambale\footnote{Institut für Algebra, Zahlentheorie und Diskrete Mathematik, Leibniz Universität Hannover, Welfengarten 1, 30167 Hannover, Germany,
\href{mailto:sambale@math.uni-hannover.de}{sambale@math.uni-hannover.de}}
}
\date{\today\footnote{This version differs significantly from the published version in J. Group Theory. The proofs of Šemetkov's theorem and Yonaha's theorem were greatly simplified.}}
\begin{document}
\frenchspacing
\maketitle
\begin{abstract}\noindent
Let $N$ be a normal subgroup of a finite group $G$. Let $N\le H\le G$ such that $N$ has a complement in $H$ and $(|N|,|G:H|)=1$. If $N$ is abelian, a theorem of Gaschütz asserts that $N$ has a complement in $G$ as well. Brandis has asked whether the commutativity of $N$ can be replaced by some weaker property.
We prove that $N$ has a complement in $G$ whenever all Sylow subgroups of $N$ are abelian. 
On the other hand, we construct counterexamples if $\Z(N)\cap N'\ne 1$. For metabelian groups $N$, the condition $\Z(N)\cap N'=1$ implies the existence of complements. 
Finally, if $N$ is perfect and centerless, then Gaschütz' theorem holds for $N$ if and only if $\Inn(N)$ has a complement in $\Aut(N)$. 
\end{abstract}

\textbf{Keywords:} finite groups, complements, Gaschütz' theorem, Šemetkov's theorem\\
\textbf{AMS classification:} 20D40, 20E22

\renewcommand{\sectionautorefname}{Section}
\section{Introduction}

It is a difficult problem to classify all finite groups $G$ with a given normal subgroup $N$ and a given quotient $G/N$. The situation becomes much easier if $N$ has a \emph{complement} $H$ in $G$, i.\,e. $G=HN$ and $H\cap N=1$. Then $G$ is determined by the conjugation action $H\to\Aut(N)$ and $G\cong N\rtimes H$.
A well-known theorem by Schur asserts that $N$ always has a complement if $N$ is abelian and $\gcd(|N|,|G:N|)=1$. Zassenhaus~\cite[Theorem~IV.7.25]{ZassenhausBook} observed that the statement holds even without the commutativity of $N$ (now called the Schur--Zassenhaus theorem). 
Although we are only interested in the existence of complements, we mention that all complements in this situation are conjugate in $G$ by virtue of the Feit--Thompson theorem.
In 1952, Gaschütz~\cite[Satz~1 on p. 99]{Gaschutz} (see also \cite[Hauptsatz~III.17.4]{Huppert}) found a way to relax the coprime condition in Schur's theorem as follows.

\begin{Thm}[\textsc{Gaschütz}]\label{Ga}
Let $N$ be an abelian normal subgroup of a finite group $G$. Let $N\le H\le G$ such that $N$ has a complement in $H$ and $\gcd(|N|,|G:H|)=1$. Then $N$ has a complement in $G$.
\end{Thm}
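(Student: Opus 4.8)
The plan is to pass to the cohomological interpretation of extensions, which is available since $N$ is abelian. The conjugation action of $G$ on $N$ factors through $Q := G/N$, turning $N$ into a $\ZZ[Q]$-module, and the extension $1\to N\to G\to Q\to 1$ is then classified by a class $\omega\in\cohom^2(Q,N)$. The standard dictionary says that complements of $N$ in $G$ are precisely the splittings of this extension, so $N$ has a complement in $G$ if and only if $\omega=0$. The whole problem thus reduces to proving $\omega=0$.

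First I would restrict to the subgroup $\overline H := H/N\le Q$, noting that $[Q:\overline H]=|G:H|$. Writing $\operatorname{res}^Q_{\overline H}\colon\cohom^2(Q,N)\to\cohom^2(\overline H,N)$ for the restriction map, I would identify $\operatorname{res}^Q_{\overline H}(\omega)$ with the class classifying the extension $1\to N\to H\to\overline H\to 1$. Since $N$ has a complement in $H$ by hypothesis, that extension splits, and therefore $\operatorname{res}^Q_{\overline H}(\omega)=0$.

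Next I would apply the corestriction map $\operatorname{cor}^Q_{\overline H}\colon\cohom^2(\overline H,N)\to\cohom^2(Q,N)$ together with the standard relation $\operatorname{cor}^Q_{\overline H}\circ\operatorname{res}^Q_{\overline H}=[Q:\overline H]\cdot\id=|G:H|\cdot\id$. Combined with the previous step this gives $|G:H|\cdot\omega=0$. On the other hand, $\cohom^2(Q,N)$ is annihilated by $|N|$, since every cochain takes values in $N$ and so has order dividing $\exp(N)$, which divides $|N|$; hence $|N|\cdot\omega=0$. Because $\gcd(|N|,|G:H|)=1$, choosing integers $a,b$ with $a|N|+b|G:H|=1$ yields $\omega=a|N|\omega+b|G:H|\omega=0$, so the extension splits and $N$ has a complement in $G$.

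The formal manipulations are routine once the cohomological dictionary is in place, so the real work lies in setting up that dictionary correctly: verifying that $\operatorname{res}^Q_{\overline H}(\omega)$ is genuinely the class of the extension $H$ (so that ``complement in $H$'' translates into $\operatorname{res}(\omega)=0$) and recalling the $\operatorname{cor}\circ\operatorname{res}=[Q:\overline H]$ identity. I expect this translation to be the main obstacle. An alternative, purely group-theoretic route avoids cohomology altogether: starting from a complement $K$ of $N$ in $H$, one picks a transversal of $H$ in $G$ and averages the associated cocycle data over $N$, using $\gcd(|N|,|G:H|)=1$ to invert $|G:H|$ modulo $\exp(N)$ and produce an honest complement; there the obstacle is instead the bookkeeping of the averaging and checking that the correction term lands inside $N$.
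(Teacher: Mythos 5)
Your proof is correct, but note that the paper never proves this statement: Theorem~\ref{Ga} is quoted as a classical result, with references to Gasch\"utz' original article and to Huppert's book, so there is no in-paper argument to compare against. What you give is the standard cohomological proof, and each step is sound: since $N$ is abelian, complements of $N$ in $G$ correspond bijectively to splittings of $1\to N\to G\to Q\to 1$, classified by $\omega\in\cohom^2(Q,N)$; because $N\le H$, the preimage of $\overline H=H/N$ in $G$ is exactly $H$, so $\operatorname{res}^Q_{\overline H}(\omega)$ is indeed the class of $1\to N\to H\to\overline H\to 1$ and vanishes by hypothesis; the identity $\operatorname{cor}\circ\operatorname{res}=[Q:\overline H]\cdot\id$ then gives $|G:H|\,\omega=0$; and $\cohom^2(Q,N)$ is a subquotient of the group of $N$-valued cochains, hence annihilated by $\exp(N)$, which divides $|N|$, so $|N|\,\omega=0$ and B\'ezout forces $\omega=0$. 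Two remarks on the comparison with the literature the paper cites: the proof the paper highlights (Brandis) was designed precisely to \emph{avoid} cohomology, so your argument is the ``other'' standard route, not a reconstruction of that one; and the averaging argument you sketch at the end as an alternative is essentially Gasch\"utz'/Brandis' elementary proof --- averaging a complement-defining cocycle over a transversal of $H$ in $G$ and inverting $|G:H|$ modulo $\exp(N)$ is exactly corestriction written out explicitly on cocycles. So the two routes you describe are really the cohomological and the elementary incarnations of the same transfer idea, and either one is a complete proof once the dictionary (complements $\leftrightarrow$ splittings $\leftrightarrow$ vanishing of the class) is verified, as you correctly note.
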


Unlike Schur's theorem, \autoref{Ga} does not generalize to non-abelian groups $N$. The counterexample of smallest order is attributed to Baer, see \cite[p.~225]{ScottBook}. In modern notation it can be described as a central product $G=\SL(2,3)*C_4$ ($=\mathtt{SmallGroup}(48,33)$ in the small groups library~\cite{GAP48}) where $N=Q_8\unlhd G$ has a complement in a Sylow $2$-subgroup $H=Q_8*C_4\le G$, but not in $G$ (here, $Q_8$ denotes the quaternion group of order $8$ and $C_4$ is a cyclic group of order $4$). A similar counterexample, given by Hofmann~\cite[pp. 32--33]{Hofmann} and reproduced in Huppert's book~\cite[Beispiel~I.18.7]{Huppert}, has order $|G|=|G:H||H:N||N|=2\cdot 3^2\cdot 3^3$. Finally, a more complicated counterexample of order $2^73^2$ is outlined in Zassenhaus' book~\cite[Appendix~F, Exercise~5]{ZassenhausBook}). We produce more general families of counterexamples in \autoref{sec3}.

Although Gaschütz did obtain some non-abelian variations of his theorem, he confesses:
\begin{quote}
\textit{“Ihre Verallgemeinerung auf nichtabelsche Erweiterungen ist mir bisher nicht gelungen.”}\footnote{Translation: \emph{I did not yet succeed with their} [his theorems] \emph{generalization to non-abelian extensions.}}
\end{quote}

Brandis~\cite{Brandis} not only gave a very elementary proof of \autoref{Ga} (avoiding cohomology), but also replaced abelian groups by solvable groups under further technical assumptions.
However, he concludes like Gaschütz with:
\begin{quote}
\textit{“Insbesondere wäre es interessant zu wissen: gibt es eine größere Klasse $\mathcal{R}$ von Gruppen, als die Klasse der abelschen Gruppen, so daß falls $A\in\mathcal{R}$ folgt: der Satz von Gaschütz ist für $A$ anwendbar.”}\footnote{Translation: \emph{In particular, it would be of interest to know: is there a bigger class $\mathcal{R}$ of groups, than the class of all abelian groups, such that if $A\in\mathcal{R}$, then Gaschütz' theorem applies to $A$.}}
\end{quote}

Following his words we say that \emph{Gaschütz' theorem holds for} $N$ if for every embedding $N\le H\le G$ such that $N\unlhd G$, $\gcd(|N|,|G:H|)=1$ and $N$ has a complement in $H$, then $N$ has a complement in $G$.
By analogy to the notation of \emph{control of fusion/transfer} one could say that $H$ \emph{controls complements} of $N$ in $G$. 
Our main theorem generalizes Gaschütz' theorem as follows:

\begin{Thm}\label{sylow}
Suppose that all Sylow subgroups of $N$ are abelian. Then Gaschütz' theorem holds for $N$.
\end{Thm}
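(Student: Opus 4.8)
The plan is to argue by induction on $|N|$, proving the slightly stronger statement that a complement $L$ of $N$ in $G$ can be chosen with $K\le L$, where $K$ is the given complement of $N$ in $H$. This costs nothing to state: whenever $L$ is any complement of $N$ in $G$ with $K\le L$, then $HL=G$ (as $HL\supseteq NL=G$), so $|L\cap H|=|L||H|/|G|=|H:N|=|K|$, forcing $L\cap H=K$. If $N$ is abelian, this refined form is exactly what the usual proof of \autoref{Ga} produces, so I would assume $N$ nonabelian. Since subgroups and quotients of a group with abelian Sylow subgroups again have abelian Sylow subgroups (a Sylow subgroup of a subgroup or quotient is a subgroup or quotient of an abelian Sylow subgroup), the inductive hypothesis is available for every smaller section of $N$.

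For the inductive step I would pick a minimal normal subgroup $M$ of $G$ with $M\le N$. If $M=N$ we are in the base case below; otherwise $1\ne M\subsetneq N$. Passing to $G/M$, the triple $(N/M,H/M,KM/M)$ satisfies all hypotheses — in particular $\gcd(|N/M|,|G:H|)=1$, and $KM\cap N=M$ by Dedekind's law so that $KM/M$ complements $N/M$ in $H/M$ — whence induction yields $L$ with $M\le L\le G$, $G=NL$, $N\cap L=M$ and $K\le KM\le L$. It then suffices to complement $M$ in $L$, since a complement $D$ of $M$ in $L$ satisfies $G=NL=NMD=ND$ and $N\cap D\le M\cap D=1$. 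Here I would apply the inductive hypothesis once more to $M\unlhd L$ with intermediate subgroup $H\cap L$: one computes $HL=G$, hence $|L:H\cap L|=|G:H|$ is coprime to $|M|$ and $(H\cap L)\cap N=M$; and because $K\le L$ a short order count gives $H\cap L=KM$, so $K$ is a complement of $M$ in $H\cap L$. As $|M|<|N|$, induction produces a complement $D\supseteq K$ of $M$ in $L$, which is the desired complement of $N$ in $G$.

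It remains to treat the base case in which $N$ is a minimal normal subgroup of $G$, i.e.\ $N=S_1\times\dots\times S_k$ with the $S_i$ isomorphic simple groups having abelian Sylow subgroups. If the $S_i$ are abelian, then $N$ is elementary abelian and refined \autoref{Ga} finishes. If the $S_i$ are nonabelian, each is centerless and so $\Z(N)=1$; I would exploit this by mapping $G$ into $\Aut(N)$ by conjugation, with kernel $\C_G(N)$ and image $X$ satisfying $\Inn(N)\le X\le\Aut(N)$. Since $N\cap\C_G(N)=\Z(N)=1$, a complement of $\Inn(N)$ in $X$ pulls back, together with $\C_G(N)$, to a complement of $N$ in $G$; and if $\Inn(N)$ even has a complement $C$ in $\Aut(N)$, then $X\cap C$ complements $\Inn(N)$ in $X$ by Dedekind's law. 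Thus the nonabelian base case reduces to showing that $\Inn(N)$ is complemented in $\Aut(N)$, and via $\Aut(S^k)=\Aut(S)\wr\operatorname{Sym}(k)$ and $\Inn(S^k)=\Inn(S)^k$ this reduces to the single simple factor: $\Inn(S)$ has a complement in $\Aut(S)$ for every nonabelian simple group $S$ with abelian Sylow subgroups.

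I expect this last point to be the main obstacle. For abelian chief factors the whole argument is powered by the restriction/transfer mechanism behind \autoref{Ga}, which is simply unavailable for nonabelian $N$; the genuinely new input is the splitting of $\Aut(S)$ over $\Inn(S)$ for simple $A$-groups $S$, which I would establish by invoking the classification of such groups (of Walter type) and checking case by case that the field, diagonal and graph automorphisms furnish a complement to $\Out(S)$. A secondary technical nuisance is threading the refinement $K\le L$ through the centerless reduction, where the factor $\C_G(N)$ is absorbed in the passage to $\Aut(N)$; here I would use the coprimeness $\gcd(|N|,|G:H|)=1$ together with a conjugacy argument for the relevant complements to pin down the complement of $N$ in $G$ so that it still contains $K$.
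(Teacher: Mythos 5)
There is a genuine gap, and it is fatal to the whole design: the strengthened statement you induct on --- that a complement $L$ of $N$ in $G$ can always be chosen with $K\le L$ --- is false, already when $N$ is abelian. Take $G=C_3\times S_3$ with $C_3=\langle c\rangle$, and set $N=1\times A_3$, $H=C_3\times A_3$, $K=\langle (c,(123))\rangle$. Then $N\unlhd G$, $\gcd(|N|,|G:H|)=\gcd(3,2)=1$, and $K$ is a complement of $N$ in $H$. But every complement of $N$ in $G$ is isomorphic to $G/N\cong C_6$, hence is generated by an element of order $6$, i.e.\ by some $(a,\tau)$ with $a\ne 1$ and $\tau$ a transposition; such a subgroup equals $C_3\times\langle\tau\rangle$. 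So the three complements of $N$ in $G$ all meet $H$ in $C_3\times 1$, none contains $K$, and $L\cap H$ is never even conjugate to $K$. In particular your claim that ``the usual proof of \autoref{Ga} produces exactly the refined form'' is wrong; no proof can produce it. Cohomologically, the refinement amounts to surjectivity of the restriction map $\cohom^1(G/N,N)\to\cohom^1(H/N,N)$, whereas the coprimality hypothesis only yields injectivity (via $\mathrm{cor}\circ\mathrm{res}=|G:H|\cdot\mathrm{id}$); in the example above this restriction map is zero while $\cohom^1(H/N,N)\cong C_3$. Nor can the refinement simply be dropped from your induction: in the inductive step, the only reason $M$ is known to have a complement in $H\cap L$ is the equality $H\cap L=KM$, which uses $K\le L$; without it you know only that $(H\cap L)/M$ complements $N/M$ in $H/M$, which says nothing about splitting over $M$ inside $H\cap L$. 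This is precisely the obstruction that the paper circumvents by proving a differently shaped inductive statement, \autoref{thmgeneral} (existence of $H\le G$ with $G=HN$ and $H\cap N$ a $\pi'$-group, by induction on $|G|+|N|+|\pi|$), combined with the Frattini-type \autoref{lem}, rather than strengthening the conclusion of \autoref{Ga}.

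A secondary, structural point: even where your outline is sound, it is not elementary. Your base case for a nonabelian minimal normal subgroup reduces to the splitting of $\Aut(S)$ over $\Inn(S)$ for nonabelian simple groups $S$ with abelian Sylow subgroups, to be checked case by case after invoking Walter's classification; that is essentially Baki\'{c}'s classification-dependent proof of \autoref{Shemetkov}, which the paper explicitly set out to replace. The paper's proof of \autoref{sylow} instead runs through Šemetkov's theorem proved elementarily: in the perfect case of \autoref{thmgeneral} the hypothesis of abelian Sylow subgroups enters only through Taunt's theorem ($N_p\cap\Z(N)\cap N'=1$), so no classification of simple groups is needed. Thus even a repaired version of your argument would prove the theorem only modulo heavy input that the paper's route avoids.
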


The proof of \autoref{sylow} follows easily from a forgotten theorem of Šemetkov\footnote{the transliteration Shemetkov is also used in the literature}, which is presented and proved in the next section. We further collect and prove numerous other theorems on the existence of complements in that section. In the last section we construct counterexamples to show the following among other theorems:

\begin{Thm}
If $N'\cap\Z(N)\ne 1$, then Gaschütz' theorem does not hold for $N$. In particular, it does not hold for nonabelian nilpotent groups.
\end{Thm}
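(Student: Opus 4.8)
The plan is to turn the structural hypothesis $z\in N'\cap\Z(N)$ into an explicit counterexample $(G,H)$ modelled on Baer's group $\SL(2,3)*C_4$, and then to read off the nilpotent case as a corollary. First I would reduce to the case that $z$ has prime order: if $1\ne z\in N'\cap\Z(N)$ and $p\mid\ord(z)$, then $z^{\ord(z)/p}$ still lies in the subgroup $N'\cap\Z(N)$ and has order $p$, so I may assume $\ord(z)=p$ and fix a central $C_p=\langle z\rangle\le N'\cap\Z(N)$.

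For the construction I would amalgamate $N$ centrally with a $p$-group that carries the extra $p$-power needed to break splitting, and then add a coprime twist. Concretely, I would produce an auxiliary group $L$ with $N\unlhd L$, $z\in\Z(L)$ and $\gcd(|L:N|,|N|)=1$ (so that $N$ is complemented in $L$ by Schur--Zassenhaus, mirroring $\SL(2,3)=Q_8\rtimes C_3$), together with a $p$-group $A$ having a central subgroup $\langle a\rangle\cong C_p$. Forming the central products $G:=L*A$ and $H:=N*A$ over the identification $z=a$ gives $N\unlhd G$, $N\le H\le G$ and $|G:H|=|L:N|$, whence $\gcd(|N|,|G:H|)=1$; here $H$ plays the role of Baer's Sylow $2$-subgroup. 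The choice $A=C_{p^2}$ with $a=c^p$ already works whenever $z^{-1}$ is a $p$-th power in $N$, for then an element $xc$ with $x^p=z^{-1}$ has order $p$ and generates a complement to $N$ in $H$. For groups $N$ of exponent $p$ (e.g.\ extraspecial groups of exponent $p$), where $z$ is not a $p$-th power, I would instead take $A$ to be a suitable nonabelian $p$-group, or equivalently realise $G$ as a non-split central extension by $\langle z\rangle$ of a split extension of $N/\langle z\rangle$ by a coprime group, so that $N$ is still complemented in $H$ while the central defect survives in $G$.

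There are then two verifications. That $N$ is complemented in $H$ is built into the choice above and is routine. The heart of the matter is that $N$ has \emph{no} complement in $G$, and this is where $z\in N'$ enters. A hypothetical complement $K$ must contain an element $w=xc$ (with $x\in N$ and $c$ a generator of the extra cyclic $p$-part) projecting to a generator of $H/N$, while the $L$-part supplies a coprime element $s$ normalising $N$ so that the conjugates $w,w^{s},w^{s^{2}},\ldots$ all lie in $K$. Multiplying these out, the amalgamation relation $c^{p}=z$ together with $z\in N'$ forces a product of the $w^{s^{i}}$ (which maps to $1$ in $G/N$) to equal a nontrivial element of $N$, contradicting $K\cap N=1$; this is precisely the computation $(ic)(jc)=\pm k\cdot z\ne1$ in Baer's example. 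Conceptually this is the point where Gaschütz' injectivity argument breaks down: for abelian $N$ the obstruction lives in $\cohom^{2}(G/N,N)$ and restricts injectively to the coprime-index subgroup $H/N$, whereas for nonabelian $N$ the relevant module is only $\Z(N)$, and the defect detected by $z\in N'\cap\Z(N)$ is not controlled by $H$.

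Finally the ``in particular'' claim is immediate: if $N$ is nonabelian nilpotent then $N'\ne1$ is a nontrivial normal subgroup of the nilpotent group $N$, hence meets the centre, so $N'\cap\Z(N)\ne1$ and the main assertion applies. I expect the main obstacle to be the no-complement verification in $G$, together with exhibiting the auxiliary data $L$, $s$ and $A$ uniformly across all $N$ — in particular handling the exponent-$p$ case, where the naive cyclic central factor $C_{p^2}$ does not already complement $N$ in $H$ and must be replaced by a genuinely nonabelian (or non-split central) amalgamation.
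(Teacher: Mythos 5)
Your reduction to a central element $z$ of prime order and your ``in particular'' deduction for nilpotent groups are both fine, but the core of your construction has a genuine gap: it presupposes auxiliary data that need not exist. Your counterexample $G=L*A$ requires a group $L$ with $N\unlhd L$, $\gcd(|L:N|,|N|)=1$, $z\in\Z(L)$, together with a coprime element $s\in L$ whose conjugation action on $N$ makes the product of the conjugates $w^{s^i}$ land in $N\setminus\{1\}$. Since $N$ is a normal Hall subgroup of $L$, the Schur--Zassenhaus theorem gives $L=N\rtimes S$, so your whole construction is governed by the (coprime) action of $S$ on $N$. But there are groups $N$ with $N'\cap\Z(N)\ne 1$ admitting no nontrivial coprime action at all: take $N=D_8$, where $\Aut(N)\cong D_8$ is a $2$-group, so every $S$ of odd order acts trivially. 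Then $L=N\times S$, hence $G=(N*A)\times S$ and $H=N*A$; as soon as $N$ has a complement $B$ in $H$ (which your setup requires, since the hypothesis of Gaschütz' theorem must hold), $B\times S$ is a complement of $N$ in $G$. So for $N=D_8$ your framework provably cannot produce a counterexample, whatever $p$-group $A$ you choose --- yet $D_8$ is nonabelian nilpotent and thus covered by the statement. The same collapse occurs for every $N$ on which all coprime actions are trivial, and even when nontrivial coprime actions exist you give no recipe for choosing $L$, $s$, $A$ so that the final computation yields a nontrivial element of $N$ (you flag this yourself as ``the main obstacle'').

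The paper's proof avoids exactly this trap: instead of amalgamating $N$ with a $p$-group and letting a coprime element act on $N$, it amalgamates $N$ with the wreath product $W=N\wr C_q=D\rtimes\langle\alpha\rangle$, $D=N^q$, along the diagonal central element $\overline{z}=(z,\ldots,z)$, i.e. $G=(N\times W)/\langle(z,\overline{z})\rangle$ and $H=ND$. The coprime element $\alpha$ permutes the $q$ copies of $N$ inside $D$ rather than acting on $N$ itself, so it exists for every $N$ and every $q>1$ coprime to $|N|$. Moreover, the twisted diagonal $\{x_1(x_1,\ldots,x_q):(x_1,\ldots,x_q)\in D\}$ is a complement of $N$ in $H$ uniformly --- this also disposes of your unresolved exponent-$p$ case, where $A=C_{p^2}$ does not work. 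Finally, a hypothetical complement $L$ of $N$ in $G$, normalized by Wielandt's (or Sylow's) theorem to contain $\alpha$, is forced to contain all elements $([x,y],1,\ldots,1)$, hence $(z,1,\ldots,1)$ because $z\in N'$, hence $\overline{z}\in L\cap N$, a contradiction. If you want to salvage your approach, replacing your $A$ (a $p$-group on which nothing acts) by $N^q$ equipped with the coordinate shift is precisely the missing idea.
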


\section{On the existence of complements}\label{sec2}

Our notation is standard apart from the commutator convention $[x,y]:=xyx^{-1}y^{-1}$ for elements $x,y$ of a group. The commutator subgroup, the center and the Frattini subgroup of $G$ are denoted by $G'=[G,G]$, $\Z(G)$ and $\Phi(G)$ respectively. For $H\le G$ and $x\in G$ we write $^xH=xHx^{-1}$.
We will often use the following elementary fact: If $K$ is a complement of $N$ in $G$ and $N\le H\le G$, then $H\cap K$ is a complement of $N$ in $H$. Indeed, by the Dedekind law we have $N(H\cap K)=NK\cap H=H$ and $(H\cap K)\cap N=1$. The same argument shows that $KM/M$ is a complement of $N/M$ in $G/M$ for every normal subgroup $M\unlhd G$ contained in $N$. 

\autoref{Ga} implies that an abelian normal subgroup $N\unlhd G$ has a complement in $G$ if and only if for every Sylow subgroup $P/N$ of $G/N$, $N$ has a complement in $P$. 
This was improved by Šemetkov~\cite[Theorem~2]{Shemetkov} as follows (a very similar result for solvable groups appeared in Wright~\cite[Theorem~2.6]{Wright}).

\begin{Thm}[\textsc{Šemetkov}]\label{Shemetkov}
Let $N\unlhd G$ such that for every prime divisor $p$ of $|G:N|$, $N$ has an abelian Sylow $p$-subgroup $P$, and $P$ has a complement in a Sylow $p$-subgroup of $G$. Then $N$ has a complement in $G$. 
\end{Thm}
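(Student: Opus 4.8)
The plan is to induct on $|G|$ and to funnel everything back to \autoref{Ga}, first disposing of the coprime primes, then settling the abelian case, and finally peeling $N$ apart one chief factor at a time. If $\gcd(|N|,|G:N|)=1$ the Schur--Zassenhaus theorem already gives a complement, so I may assume some prime divides both orders. Throughout, for each prime $p\mid\gcd(|N|,|G:N|)$ I fix a Sylow $p$-subgroup $S\in\Syl_p(G)$ with $P:=S\cap N\in\Syl_p(N)$ abelian and, by hypothesis, a complement $T$ with $S=PT$, $P\cap T=1$. Here $P=S\cap N\unlhd S$, and the relation I shall use repeatedly is $T\cap N\le T\cap P=1$.

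First I would prove the theorem for abelian $N$. The local form of \autoref{Ga} quoted above reduces this to complementing $N$ in every subgroup $R$ with $R/N\in\Syl_p(G/N)$. Fixing such $p$ and $R$, let $N_{p'}$ be the characteristic Hall $p'$-subgroup of the abelian group $N$. Since $|R|_p=|G|_p$, I may choose $S\in\Syl_p(R)\subseteq\Syl_p(G)$; as $S\cap N_{p'}=1$, the quotient map identifies $R/N_{p'}$ with $S$, carrying $N/N_{p'}$ onto $P$. The hypothesised complement $T$ of $P$ in $S$ thus becomes a complement of $N/N_{p'}$ in $R/N_{p'}$, and lifting it through the $p'$-group $N_{p'}$ by Schur--Zassenhaus (the index $|R:N_{p'}|$ is a $p$-power) yields a complement of $N$ in $R$. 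The local criterion then finishes the abelian case.

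For general $N$ I induct on $|N|$. The key elementary point is that the hypotheses descend to every quotient $G/M$ with $1\ne M\unlhd G$ and $M\le N$: the Dedekind law gives $(SM/M)\cap(N/M)=PM/M\in\Syl_p(N/M)$, still abelian, while a one-line order count—using precisely $T\cap M\le T\cap N=1$ together with $P\cap M=S\cap M$—shows $|PM/M\cap TM/M|=|P||T|/|S|=1$, so $TM/M$ remains a complement of $PM/M$ in $SM/M\in\Syl_p(G/M)$. Taking $M$ to be a minimal normal subgroup of $G$ contained in $N$, induction supplies a complement $K/M$ of $N/M$ in $G/M$, i.e.\ a subgroup $K$ with $K\cap N=M$ and $KN=G$. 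Any complement $L$ of $M$ in $K$ then satisfies $L\cap N=L\cap M=1$ and $LN=KN=G$, so everything reduces to complementing the single chief factor $M$ inside $K$.

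This last step is the main obstacle. When $M$ is a $q$-group with $q\nmid|G:N|$ (in particular whenever $\gcd(|M|,|K:M|)=1$) the Schur--Zassenhaus theorem applies. When $M$ is an elementary abelian $q$-group with $q\mid|G:N|$, then $M\le O_q(G)\le S_q\cap N=P_q$ sits inside the abelian Sylow $q$-subgroup, and I would invoke \autoref{Ga} once more to reduce the task to complementing the abelian group $M$ in a Sylow $q$-subgroup of $K$; since $K\cap P_q=M\cap S_q$, such a local complement is exactly the Sylow complement $T$ (for the prime $q$) placed inside $K$, and the crux is to exploit the abelianness of $P_q$ to move $T$ into $K$. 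The genuinely hard remaining possibility—also covering the case where $N$ itself is minimal normal, so that the induction is vacuous—is that $M$ is non-abelian, hence centerless and a product of non-abelian simple groups; here the complement must be built through the action of $K$ on $M$ by automorphisms, and this is the point of contact with the $\Aut(N)$-criterion announced in the abstract. I expect the non-abelian minimal-normal case, and the placement of the Sylow complement $T$ inside $K$, to be the two steps requiring the most care.
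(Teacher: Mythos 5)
Your reductions up to the last step are sound: the abelian case via \autoref{Ga}, the descent of the hypotheses to quotients $G/M$ (your order count is correct), the passage to a minimal normal subgroup $M\le N$ of $G$, and the coprime part of the final step via Schur--Zassenhaus. But the two cases you defer are not details requiring care; they are the entire content of the theorem, and the route you sketch for the elementary abelian case is unworkable as stated. The problem is that your induction hands you an \emph{arbitrary} complement $K/M$ of $N/M$ in $G/M$, and for a bad choice of $M$ and $K$ the subgroup $M$ simply has no complement in $K$ at all, so no conjugation trick can place $T$ inside $K$. Concretely, let $N=\langle z\rangle\times S_3$ and $G=N\langle r\rangle$, where $r^2=z$ and conjugation by $r$ fixes $3$-cycles and sends each transposition $s$ to $zs$. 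Then $|G:N|=2$, the Sylow $2$-subgroup $P=\langle z,s\rangle\cong C_2\times C_2$ of $N$ is abelian and has the complement $\langle rs\rangle$ in $S=\langle r,s\rangle\cong D_8\in\Syl_2(G)$, so Šemetkov's hypotheses hold (and indeed $\langle rs\rangle$ complements $N$ in $G$). Now take the minimal normal subgroup $M=\langle z\rangle\le\Z(G)$ and the complement $K/M=\langle\bar r\rangle$ of $N/M$ in $G/M$: here $K=\langle r\rangle\cong C_4$, $K\cap N=\langle r^2\rangle=M$, and $M$ has no complement in $K$, since $M$ is the unique involution subgroup of $K$. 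So your final step fails for a perfectly legitimate output of your induction; to proceed you would have to prove that some good pair $(M,K)$ can be chosen, and your argument offers no mechanism for that. The non-abelian minimal normal case is in even worse shape: you give no argument at all, and this is exactly the case where Baki\'{c}'s proof had to invoke Walter's classification of the simple groups with abelian Sylow $2$-subgroups.

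The paper circumvents both obstacles by proving a stronger and more flexible statement (\autoref{thmgeneral}): instead of producing complements, it produces supplements $H$ with $G=HN$ such that $H\cap N$ avoids a prescribed set of primes $\pi$, and Šemetkov's theorem then follows by combining this with \autoref{lem}, which replaces any supplement $H$ by a minimal one satisfying $H\cap N\le\Phi(H)$. These two devices are precisely what your scheme lacks: working with supplements makes the induction insensitive to the bad-complement phenomenon above, and in the perfect case $N=N'$ (which subsumes your non-abelian minimal normal case) the paper does not split off a minimal normal subgroup at all, but passes to $L=K\C_N(P)\le\N_G(P)$ and uses Taunt's theorem ($N_p\cap\Z(N)\cap N'=1$ when $N_p$ is abelian) to force $\C_N(P)<N$, so that induction applies to $\C_N(P)\unlhd L$. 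Some idea of this kind, rather than a refinement of the minimal-normal-subgroup scheme, is needed to close your gap.
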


Since this result is not very well-known, we provide a proof for the convenience of the reader. Šemetkov's original proof (reproduced in Kirtland's book~\cite[Theorem~4.9]{Kirtland}) is rather involved an depends on a lemma of Huppert. A shorter proof relying on Walter's classification of the simple groups with abelian Sylow $2$-subgroups was given by Baki\'{c}~\cite[Theorem~5]{Bakic}. We combined ideas from both proofs to give a short elementary argument.
The first lemma generalizes the Schur--Zassenhaus theorem (noting that $|H_1|$ divides $|H|$).

\begin{Lem}\label{lem}
Let $N\unlhd G$ and $H\le G$ such that $G=HN$. Then there exists $H_1\le H$ such that $G=H_1N$ and $H_1\cap N\le\Phi(H_1)$. In particular, $|H_1|$ and $|G:N|$ have the same prime divisors.
\end{Lem}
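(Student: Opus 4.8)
The plan is to take $H_1$ to be minimal among the subgroups of $H$ that still complement $N$, and then to verify the Frattini condition by a standard non-generator argument. Concretely, I would choose $H_1\le H$ minimal with respect to $H_1N=G$; such a subgroup exists since $H$ itself satisfies $HN=G$ and $G$ is finite. The decisive structural observation is that $N\unlhd G$ forces $H_1\cap N\unlhd H_1$, which is exactly what allows me to form products of $H_1\cap N$ with subgroups of $H_1$ and still obtain subgroups.

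To establish $H_1\cap N\le\Phi(H_1)$, I would argue by contradiction. If $H_1\cap N\not\le\Phi(H_1)$, then some maximal subgroup $M<H_1$ fails to contain $H_1\cap N$. Since $H_1\cap N\unlhd H_1$, the product $M(H_1\cap N)$ is a subgroup of $H_1$ properly containing $M$, so maximality gives $M(H_1\cap N)=H_1$. Multiplying by $N$ and using $(H_1\cap N)N=N$ yields $MN=H_1N=G$ while $M$ is a proper subgroup of $H_1\le H$, contradicting the minimality of $H_1$. Hence every maximal subgroup of $H_1$ contains $H_1\cap N$, that is, $H_1\cap N\le\Phi(H_1)$.

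For the final assertion, note first that $H_1/(H_1\cap N)\cong H_1N/N=G/N$, so every prime divisor of $|G:N|$ divides $|H_1|$. For the reverse containment I would invoke the general fact that every prime dividing $|\Phi(H_1)|$ also divides $|H_1/\Phi(H_1)|$: if some prime $p$ divided $|\Phi(H_1)|$ but not $|H_1/\Phi(H_1)|$, then a Sylow $p$-subgroup $P$ of $H_1$ would lie in $\Phi(H_1)$ and be normal in $H_1$ (being characteristic in the nilpotent group $\Phi(H_1)\unlhd H_1$); by Schur--Zassenhaus it would admit a complement $Q$, and since the elements of $P\le\Phi(H_1)$ are non-generators we would obtain $H_1=\langle P,Q\rangle=Q$, forcing $P=1$. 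Because $H_1\cap N\le\Phi(H_1)$, every prime dividing $|H_1|$ therefore divides $|H_1/\Phi(H_1)|$, and the latter is a homomorphic image of $H_1/(H_1\cap N)\cong G/N$; hence $|H_1|$ and $|G:N|$ have exactly the same prime divisors.

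The index and subgroup computations are routine; the step deserving the most care is the non-generator characterization of $\Phi(H_1)$, used both in the Frattini inclusion and in the prime-divisor claim, where the minimality of the complement $H_1$ is what actually drives the argument.
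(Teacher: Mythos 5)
Your proposal is correct and takes essentially the same approach as the paper: a minimal supplement $H_1$ of $N$ in $H$, the maximal-subgroup argument giving $H_1\cap N\le\Phi(H_1)$, and then Schur--Zassenhaus combined with the Frattini (non-generator) property to exclude extra prime divisors. The only cosmetic difference is that you factor the last step through the general fact that $\Phi(H_1)$ cannot contain a nontrivial Sylow subgroup of $H_1$, whereas the paper applies the same argument directly to a Sylow subgroup of the nilpotent group $H_1\cap N$; note also that $H_1$ is a supplement of $N$, not a complement, as your opening sentence suggests.
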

\begin{proof}
Choose $H_1\le H$ minimal with respect to inclusion such that $G=H_1N$. Note that $H_1\cap N$ is normal in $H_1$. Let $M<H_1$ be a maximal subgroup of $H_1$. If $H_1\cap N\nsubseteq M$, then $H_1=M(H_1\cap N)$ and we obtain the contradiction $G=H_1N=MN$. Hence, $H_1\cap N$ is contained in all maximal subgroups of $H_1$ and it follows that $H_1\cap N\le\Phi(H_1)$. In particular, $H_1\cap N$ is nilpotent. Since $G/N\cong H_1/H_1\cap N$, the prime divisors of $|G:N|$ divide $|H_1|$. Conversely, let $p$ be a prime divisor of $|H_1|$. Suppose that $p$ does not divide $|H_1/H_1\cap N|$. Then $H_1\cap N$ contains a Sylow $p$-subgroup $P$ of $H_1$. Since $H_1\cap N$ is nilpotent, it follows that $P$ is the unique Sylow $p$-subgroup of $H_1\cap N\unlhd H_1$ and therefore $P\unlhd H_1$. By the Schur--Zassenhaus theorem, $P$ has a complement $K$ in $H_1$. Now $K$ lies in a maximal subgroup $M<H_1$, but so does $P\le H_1\cap N\le\Phi(H_1)$. Hence, $H_1=PK\le M$, a contradiction. This shows that $p$ divides $|H_1/H_1\cap N|=|G:N|$.
\end{proof}

The following theorem introduces a new parameter in order to perform induction. 

\begin{Thm}\label{thmgeneral}
Let $N\unlhd G$. 
Let $\pi$ be a set of primes $p$ such that there exists a Sylow $p$-subgroup $P$ of $G$ such that $P\cap N$ is abelian and $P\cap N$ has a complement in $P$. Then there exists $H\le G$ such that $G=HN$ and no prime divisor of $|H\cap N|$ lies in $\pi$.
\end{Thm}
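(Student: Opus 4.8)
The plan is to induct on $|G|$. First I would discard the harmless primes: if $p\in\pi$ does not divide $|N|$ then $p$ never divides $|H\cap N|$ for any $H\le G$, so such primes may be deleted from $\pi$, and if no prime of $\pi$ divides $|N|$ then $H=G$ already works. So assume some $p\in\pi$ divides $|N|$, and let $S=P\cap N$ be the corresponding abelian Sylow $p$-subgroup of $N$ with complement $Q$ in the Sylow $p$-subgroup $P$ of $G$. Note $Q\cap N=Q\cap S=1$, a fact I will use repeatedly.

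The cleanest case is when $S\unlhd G$ (equivalently $S=\pcore_p(N)$). Then $S$ is an abelian normal subgroup of $G$ with $S\le P\le G$, $\gcd(|S|,|G:P|)=1$, and $S$ complemented in $P$ by $Q$; so \autoref{Ga} yields a complement $H_1$ of $S$ in $G$. Then $G=H_1N$, the index $|G:H_1|=|S|$ is a power of $p$, and $H_1\cap N$ is a $p'$-group (any $p$-element of $H_1\cap N$ lies in $S\cap H_1=1$). Because $|G:H_1|$ is a $p$-power, $H_1$ contains a full Sylow $q$-subgroup of $G$ for every $q\ne p$, so the hypotheses pass to $(H_1,H_1\cap N)$ with $\pi$ replaced by $\pi\setminus\{p\}$; since $|H_1|<|G|$, induction closes this case.

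If $S$ is not normal I would invoke the Frattini argument for the Sylow $p$-subgroup $S$ of $N\unlhd G$, giving $G=\N_G(S)\,N$. Put $L=\N_G(S)<G$. Since $S=P\cap N\unlhd P$ we have $P\le L$, so inside $L$ the same $P,S,Q$ witness the $p$-hypothesis for the normal subgroup $L\cap N=\N_N(S)$. As $G=LN$, any $H\le L$ with $L=H(L\cap N)$ satisfies $G=HN$ and $H\cap N=H\cap(L\cap N)$; thus it suffices to solve the problem for $(L,L\cap N,\pi)$ and pull back, which by $|L|<|G|$ is handled by induction, \emph{provided the hypotheses survive in} $L$. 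I would also record the quotient reduction: for any $M\unlhd G$ with $M\le N$ the hypotheses pass to $\bar G=G/M$, because $\bar P\cap\bar N$ is a quotient of the abelian $S$, hence abelian, and $\bar Q$ complements it since $Q\cap N=1$ forces $\bar Q\cap\bar N=1$; so a minimal normal subgroup $M\le N$ of $\pi'$-order may be factored out harmlessly.

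The step I expect to be the main obstacle is exactly the survival of the hypotheses through the Frattini descent to $L=\N_G(S)$ for the primes $q\in\pi$, $q\ne p$, that still divide $|L\cap N|$. Here $|G:L|=|N:\N_N(S)|$ need not be a power of $p$, so $L$ need not contain a full Sylow $q$-subgroup of $G$, and a complement of the Sylow $q$-subgroup of $N$ does not in general restrict to a subgroup: already in $C_{p^2}\times C_p$ the ``diagonal'' cyclic subgroup shows that complementation of an abelian normal subgroup is not inherited by arbitrary subgroups. The crux is therefore to show that, because the Sylow subgroups of $N$ are abelian and the complements can be chosen to meet $N$ trivially, these global complements do restrict to complements inside the smaller Sylow subgroups lying in $L$; equivalently, one reduces to a single prime at a time and controls the primes dividing the Frattini index. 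Once this transfer is established, the induction terminates.
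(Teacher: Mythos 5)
Your preparatory reductions are correct: discarding primes of $\pi$ that do not divide $|N|$, the quotient reduction modulo a normal $\pi'$-subgroup $M\le N$, and, most substantially, the case $S\unlhd G$, which you settle properly via \autoref{Ga} and induction (your key observations there --- that $H_1\cap N$ is a $p'$-group and that $H_1$ contains full Sylow $q$-subgroups of $G$ for $q\ne p$, so the hypothesis for $\pi\setminus\{p\}$ is inherited --- are both valid). But the proof stops exactly where the theorem begins. In the case where $S$ is not normal, you descend via the Frattini argument to $L=\N_G(S)$ and must then verify, for every other prime $q\in\pi$, that some Sylow $q$-subgroup of $L$ splits over its intersection with $L\cap N=\N_N(S)$. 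You offer no argument for this, and you correctly diagnose why none is at hand: a Sylow $q$-subgroup of $L$ need not contain the full abelian part $G_q\cap N$ of any Sylow $q$-subgroup $G_q$ of $G$, and complements of abelian normal subgroups do not restrict to arbitrary subgroups (your own $C_{q^2}\times C_q$ example). Establishing this hypothesis for $L$ is a splitting problem of the same nature as the theorem itself, and nothing in the data at the level of $G$ provides it. So ``once this transfer is established, the induction terminates'' is not a deferred detail; it is the entire content of the theorem, and the proposal is an incomplete proof.

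It is instructive to see how the paper avoids your obstacle: its induction (on $|G|+|N|+|\pi|$, not on $|G|$ alone) is organized so that whenever the inductive hypothesis is invoked, the relevant subgroup is arranged to contain a \emph{full} Sylow intersection with the normal subgroup, and then Dedekind's law really does restrict the global complement. Concretely, if $N'<N$ one chooses $p$ with $M:=\pcore^p(N)<N$, applies induction in $G/M$ to get $K\ge M$ with $G=KN$ and $(K\cap N)/M$ a $\pi'$-group, and then applies induction to $M\unlhd K$: since $M\le K$, a Sylow $q$-subgroup $Q$ of $K$ (for $q\in\pi$, $q\ne p$) satisfies $Q\cap M=Q\cap N\in\Syl_q(N)$, so $Q$ contains the whole abelian part and inherits a complement; the prime $p$ itself is handled by a further theorem of Gasch\"utz on splitting over $\pcore^p$ (Huppert, Satz~IV.3.8). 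If $N'=N$, one removes a single prime $p$ from $\pi$, obtains $K$ with $K\cap N$ a $(\pi\setminus\{p\})'$-group, uses \autoref{lem} to force $K\cap N\le\Phi(K)$ (hence nilpotent, so its Sylow $p$-subgroup $P$ is normal in $K$), and applies induction to $\C_N(P)\unlhd L:=K\C_N(P)$; there the abelian Sylow $p$-subgroup $N_p\supseteq P$ of $N$ lies inside $\C_N(P)$, the Sylow $q$-subgroups of $L$ for the remaining primes are semidirect products $\C_N(P)_q\rtimes K_q$ by construction, and the Taunt-type result $N_p\cap\Z(N)\cap N'=1$ (Huppert, Satz~IV.2.2) guarantees $\C_N(P)<N$, so the induction is legitimate. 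These two devices --- quotients by $\pcore^p(N)$ and centralizers of normal $p$-subgroups of $K$, rather than normalizers of Sylow subgroups of $N$ --- are precisely the ideas your sketch is missing.
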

\begin{proof}
We argue by induction on $|G|+|N|+|\pi|$. 

\textbf{Case~1:} $N'<N$.\\
Choose a prime $p$ such that $M:=\pcore^p(N)<N$. If $M=1$, then the claim follows with $G=H$ (if $p\notin\pi$) or from \autoref{Ga} (if $p\in\pi$). Thus, let $M\ne 1$ and $\overline{G}:=G/M$. Let $q\in\pi$. By hypothesis, there exists a Sylow $q$-subgroup $G_q=N_q\rtimes R$ of $G$, where $N_q\in\Syl_q(N)$ is abelian. Obviously, $\overline{N_q}\le \overline{G_q}$ are Sylow subgroups $\overline{N}$ and $\overline{G}$ respectively and $\overline{N_q}$ is abelian. Since $R\cap N\le R\cap G_q\cap N=R\cap N_q=1$ we have $N_qM\cap RM=(N_qM\cap R)M=M$ and $\overline{G_q}=\overline{N_q}\rtimes\overline{R}$. By induction there exists $K/M\le \overline{G}$ with $G=KN$ such that $(K\cap N)/M$ is a $\pi'$-group. 

For $q\ne p$, every Sylow $q$-subgroup of $M$ is also a Sylow $q$-subgroup of $N$ and has a complement in $K$. Now let $q=p$ and $K_p\in\Syl_p(K)$. By another theorem of Gaschütz (see \cite[Satz~IV.3.8]{Huppert}), $M=\pcore^p(K_pM)$ has a complement $R$ in $K_pM$.
We have 
\[|R|=|K_pM:M|=|K_p:K_p\cap M|=|K:M|_p.\] 
By Sylow's theorem, $R$ must normalize a Sylow $p$-subgroup $M_p$ of $M$, because their number is $1$ modulo $p$. Now $R$ is a complement of $M_p$ in the Sylow subgroup $M_p\rtimes R$ of $K$. Since $M<N$, the claim holds for $M\le K$ by induction. Thus, there exists $H\le K$ with $K=HM$ such that $H\cap M$ is a $\pi'$-group. Now $G=KN=HMN=HN$. Since 
\[(H\cap N)/(H\cap M)\cong (H\cap N)M/M=(K\cap N)/M,\] 
also $H\cap N$ is a $\pi'$-group.

\textbf{Case~2:} $N=N'$.\\
For $\pi=\varnothing$ the claim holds with $G=H$. Hence, let $p\in\pi$ and $\tau:=\pi\setminus\{p\}$. By induction there exists $K\le G$ with $G=KN$ such that $K\cap N$ is a $\tau'$-group. We may assume that $K\cap N$ is not a $\pi'$-group, i.\,e. $p$ divides $|K\cap N|$. Let $P\in\Syl_p(K\cap N)$. By \autoref{lem}, we may assume that $K\cap N\le\Phi(K)$. In particular, $K\cap N\unlhd K$ is nilpotent and $P=\pcore_p(K\cap N)\unlhd K\le\N_G(P)$. Consider
\[L:=K\C_N(P)\le\N_G(P).\] 
By hypothesis, there exists an abelian Sylow $p$-subgroup $N_p$ of $N$ containing $P$. Then $N_p\in\Syl_p(\C_N(P))$ and $N_p$  has a complement in $L$. Let $q\in\tau$. A Sylow $q$-subgroup $K_q$ of $K$ normalizes some $\C_N(P)_q\in\Syl_q(\C_N(P))$. Since $K\cap N$ is a $\tau'$-group, $K_q\cap\C_N(P)_q=1$ and $\C_N(P)_q\rtimes K_q\in\Syl_q(L)$. By \cite[Satz~IV.2.2]{Huppert} (a generalized version of a theorem of Taunt~\cite[Theorem~4.1]{Taunt}), $N_p\cap\Z(N)=N_p\cap\Z(N)\cap N'=1$. In particular, $\C_N(P)<N$ and we are allowed to apply induction to $\C_N(P)\unlhd L$. This yields $H\le L$ with $L=H\C_N(P)$ such that $H\cap\C_N(P)$ is a $\pi'$-group. 
Then $G=KN=K\N_N(P)N=LN=HN$. Since $|N:\C_N(P)|\not\equiv 0\pmod{p}$,
\[(H\cap N)/(H\cap\C_N(P))\cong(H\cap N)\C_N(P)/\C_N(P)\] 
is a $p'$-group. On the other hand,
\[
(H\cap N)\C_N(P)/\C_N(P)\cong(K\cap N)\C_N(P)/\C_N(P)\cong(K\cap N)/(K\cap\C_N(P))\]
is a $\tau'$-group. Altogether, $H\cap N$ is a $\pi'$-group.
\end{proof}

\begin{proof}[Proof of \autoref{Shemetkov}]
Let $\pi$ be the set of all prime divisors of $|G:N|$. By \autoref{thmgeneral} there exists $H\le G$ with $G=HN$ such that $H\cap N$ is a $\pi'$-group. On the other hand, we may assume that $H\cap N$ is a $\pi$-group by \autoref{lem}. Hence, $H\cap N=1$. 
\end{proof}

We are now in a position to prove our main result.

\begin{proof}[Proof of \autoref{sylow}]
Let $N\le H\le G$ such that $N\unlhd G$, $\gcd(|N|,|G:H|)=1$, and $N$ has a complement $K$ in $H$. Let $p$ be a prime divisor of $|G:N|$. By Sylow's theorem, a given Sylow $p$-subgroup $Q$ of $K$ is contained in a Sylow $p$-subgroup $P$ of $H$. Then $P\cap N\unlhd P$ is a Sylow $p$-subgroup of $N$ and $P=(P\cap N)\rtimes Q$ by comparing orders. By hypothesis, $P\cap N$ is abelian. If $p\nmid |N|$, then $P\cap N=1$ obviously has a complement in a Sylow $p$-subgroup of $G$. On the other hand, if $p$ divides $|N|$, then $p\nmid|G:H|$ and $P$ is a Sylow $p$-subgroup of $G$. Again $P\cap N$ has a complement. 
By Šemetkov's theorem, $N$ has a complement in $G$. 
\end{proof}

Another source of examples to Brandis' question comes from a splitting criterion by Rose~\cite[Corollary~2.3]{Rose} (obtained earlier by Loonstra~\cite[Satz~4.3 and Satz~5.1]{Loonstra} in a less concise form).

\begin{Thm}[\textsc{Rose}]\label{rose}
For every finite group $N$ the following assertions are equivalent:
\begin{enumerate}[(1)]
\item $\Z(N)=1$ and the inner automorphism group $\Inn(N)$ has a complement in $\Aut(N)$.
\item If $N$ is a normal subgroup of some finite group $G$, then $N$ has a complement in $G$.
\end{enumerate}
\end{Thm}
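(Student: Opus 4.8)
The plan is to use the conjugation homomorphism $\phi\colon G\to\Aut(N)$, $g\mapsto(n\mapsto{}^gn)$, as a bridge between the two statements. Its kernel is $\C_G(N)$ and its image contains $\phi(N)=\Inn(N)$, which is normal in $\Aut(N)$ and hence in $\phi(G)$. The guiding principle is that when $\Z(N)=1$ the center obstructs nothing: then $\Z(N)=N\cap\C_G(N)=1$, so $\phi$ restricts to an injection of $N$ with image $\Inn(N)$, and complements of $N$ in $G$ should correspond to complements of $\Inn(N)$ inside $\phi(G)$.

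For (1)$\Rightarrow$(2) I would assume $\Z(N)=1$ and fix a complement $A$ of $\Inn(N)$ in $\Aut(N)$. Given an embedding $N\unlhd G$, set $\bar G:=\phi(G)$. Since $\Inn(N)\le\bar G\le\Aut(N)$, the elementary Dedekind-law fact recalled at the start of this section shows that $C:=A\cap\bar G$ is a complement of $\Inn(N)$ in $\bar G$. I would then put $D:=\phi^{-1}(C)$, which contains $\Ker\phi=\C_G(N)$, and verify that $D$ is a complement of $N$ in $G$: from $\phi(ND)=\Inn(N)C=\bar G=\phi(G)$ together with $\C_G(N)\le D$ one gets $G=ND$, while $\phi(N\cap D)\le\Inn(N)\cap C=1$ forces $N\cap D\le N\cap\C_G(N)=\Z(N)=1$. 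Both computations use $\Z(N)=1$ in an essential way.

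For (2)$\Rightarrow$(1) the easy half is the second assertion: once $\Z(N)=1$ is known we have $\Inn(N)\cong N$ normal in $\Aut(N)$, so applying (2) with $G=\Aut(N)$ and the normal subgroup $\Inn(N)\cong N$ immediately produces a complement of $\Inn(N)$ in $\Aut(N)$. The real work is deducing $\Z(N)=1$ from (2), and this is where I expect the main obstacle to lie. I would argue by contraposition: assuming $\Z(N)\ne1$, I must exhibit one overgroup in which $N$ has no complement. I would choose a prime $p\mid|\Z(N)|$ and an element $z\in\Z(N)$ of order $p$, let $p^m$ be the exponent of a Sylow $p$-subgroup of $N$, and form the central product
\[
G:=\bigl(N\times C_{p^{m+1}}\bigr)/\bigl\langle(z,c^{-p^m})\bigr\rangle,
\]
where $c$ generates $C_{p^{m+1}}$ and $z$ is identified with the generator $c^{p^m}$ of the unique subgroup of order $p$. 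Then $N\unlhd G$ with $G/N\cong C_{p^m}$ cyclic, so any complement would be a cyclic group $\langle[(n,c^i)]\rangle$ of order $p^m$ with $p\nmid i$; a direct computation gives $[(n,c^i)]^{p^m}=[(n^{p^m}z^i,1)]$, so such a complement exists if and only if some nontrivial power of $z$ is a $p^m$-th power in $N$.

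The crux is then the observation that \emph{no} nontrivial power of $z$ can be a $p^m$-th power in $N$: for every $n\in N$ the $p$-part of $n$ has order dividing $p^m$, so $n^{p^m}$ is a $p'$-element, whereas every nontrivial power of $z$ is a nontrivial $p$-element. Hence $N$ has no complement in $G$, contradicting (2) and forcing $\Z(N)=1$. I expect the careful packaging of this construction — checking that $N$ embeds normally, that $G/N$ is cyclic of order $p^m$, and that the power computation is correct — to be the most delicate part; the choice of exponent $p^m$ is exactly what makes the argument uniform and, in particular, handles cases such as $N=Q_8$, where $z$ is a $p$-th power but not a $p^m$-th power.
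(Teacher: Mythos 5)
Your proposal is correct, and both halves follow the paper's overall strategy; the difference worth recording is in how the two arguments finish the direction (2)$\Rightarrow$(1). Your proof of (1)$\Rightarrow$(2) is the paper's proof in different clothing: the paper passes to $G/M$ with $M=\C_G(N)$ and intersects a complement of $\Inn(N)\cong NM/M$ with $G/M\le\Aut(N)$, which is exactly your pullback $D=\phi^{-1}(A\cap\phi(G))$ along the conjugation homomorphism. For (2)$\Rightarrow$(1) you build the same central product as the paper, $(N\times C_{p^{m+1}})/\langle(z,c^{-p^m})\rangle$ with $p^m$ the exponent of a Sylow $p$-subgroup, but you derive the contradiction differently. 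The paper asserts that any complement $K$ of $f(N)$ satisfies $[f(N),K]=1$ \emph{by construction}, concludes $G=f(N)\times K$, and compares maximal orders of $p$-elements. You instead note that a complement must be cyclic, generated by some $[(n,c^i)]$ with $p\nmid i$, compute $[(n,c^i)]^{p^m}=[(n^{p^m}z^i,1)]$, and observe that $n^{p^m}$ is always a $p'$-element while $z^{-i}\ne 1$ is a $p$-element. Your version is actually the more careful one: in a general central product a complement of the first factor need \emph{not} centralize it --- for instance $Q_8$ has the complement $\langle[(i,c)]\rangle$ in $Q_8*C_4$, which acts nontrivially on $Q_8$ --- so the paper's claim $[f(N),K]=1$ is not automatic from the central-product structure; what really excludes complements is the choice of the exponent $p^{m+1}$, and your power computation is precisely the place where that choice does its work. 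In short: same construction, but your explicit generator computation substantiates a step the paper glosses over, at the cost of a slightly longer verification (cyclicity of the complement and the coprimality of $i$).
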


Since Rose's arguments are tailored for infinite groups, we provide a more direct proof.

\begin{proof}
Suppose that (1) holds. Let $N\unlhd G$ and $M:=\C_G(N)\unlhd G$. Then $N\cap M=\Z(N)=1$ and $\Inn(N)\cong N\cong NM/M\unlhd G/M\le\Aut(N)$. Hence, there exists $K/M\le G/M$ such that $G=NK$ and $NM\cap K=M$. It follows that $N\cap K\le N\cap NM\cap K=N\cap M=1$. This shows that $K$ is a complement of $N$ in $G$.

Now assume that (2) is satisfied. By way of contradiction, let $\Z(N)\ne 1$ and choose a prime divisor $p$ of $|\Z(N)|$. 
Let $x\in\Z(N)$ be of order $p$ and let $p^n$ be the maximal order of a $p$-element in $N$. Choose $C=\langle y\rangle\cong C_{p^{n+1}}$ and define 
\[Z:=\langle (x,y^{p^n})\rangle\le\Z(G\times C).\] 
We construct the central product $G:=(N\times C)/Z$. Then the map $f:N\to G$, $g\mapsto(g,1)Z$ is a monomorphism. By hypothesis, $f(N)$ has a complement $K\le G$. By construction, $[f(N),K]=1$ and $G=f(N)\times K$. Since $|K|=|G|/|N|=p^n$, $G$ does not contain elements of order $p^{n+1}$. But $(1,y)Z\in G$ does have order $p^{n+1}$. This contradiction shows that $\Z(N)=1$. Now $\Inn(N)\cong N$ has a complement in $\Aut(G)$ by hypothesis.
\end{proof}

Many (non-abelian) simple groups $N$ satisfy Rose's criterion. For instance, all alternating groups apart from the notable exception $A_6$. The exceptions among the groups of Lie type were classified in \cite{AutSsplit} (see also \cite{Bakic}). 
For centerless perfect groups (i.\,e. $\Z(N)=1$ and $N'=N$) we will show in \autoref{perfect} that Rose's criterion is actually necessary to obtain Gaschütz' theorem.

A group $N$ is called \emph{complete} if it satisfies the stronger condition
\begin{enumerate}
\item[(1')] $\Z(N)=1$ and $\Inn(N)=\Aut(N)$.
\end{enumerate}
In this case $N$ has a unique \emph{normal} complement in $G$ (whenever $N\unlhd G$). In fact, $G=N\times\C_G(N)$. Conversely, a theorem of Baer~\cite[Theorem~1]{BaerComplete} asserts that $N$ is complete if $N$ always has a normal complement in $G$ whenever $N\unlhd G$ (see \cite[Theorems~7.15, 7.17]{RotmanGT}). 

Starting with a centerless group $G$, Wielandt has shown that the \emph{automorphism tower}
\[G\le\Aut(G)\le\Aut(\Aut(G))\le\ldots\]
terminates in a complete group after finitely many steps (see \cite[Theorem~9.10]{IsaacsGroup}). If $G$ is non-abelian simple, then already $\Aut(G)$ is complete according to a result of Burnside (see \cite[Theorem~7.14]{RotmanGT}). In particular, the symmetric groups $S_n\cong\Aut(A_n)$ for $n\ge 7$ are complete (also for $n=3,4,5$ by different reasons).
A large class of complete groups, including some groups of odd order, was constructed in \cite{HartleyRobinson} (a paper dedicated to Gaschütz).  

The following elementary observation extends the class of groups further ($\Out(N)=\Aut(N)/\Inn(N)$ denotes the outer automorphism group of $N$).

\begin{Prop}\label{prop}\hfill
\begin{enumerate}[(i)]
\item Let $N_1,\ldots,N_k$ be finite groups. Then Rose's criterion holds for $N_1\times\ldots\times N_k$ if and only if it holds for $N_1,\ldots,N_k$.

\item\label{three} 
Let $N=N_1\times\ldots\times N_k$ with characteristic subgroups $N_1,\ldots,N_k\le N$. If Gaschütz' theorem holds for $N_1,\ldots,N_k$, then Gaschütz' theorem holds for $N$. 

\item Let $N$ be a finite group with a characteristic subgroup $M$ such that $M$ fulfills Rose's criterion and Gaschütz' theorem holds for $N/M$. Then Gaschütz' theorem holds for $N$.

\item\label{new} Let $N$ be a finite group with a characteristic subgroup $M$ such that $\gcd(|M|,|\Z(N)||\Out(N)|)=1$ and all Sylow subgroups of $M$ are abelian. Suppose that $M$ has a complement in $N$ and Gaschütz' theorem holds for $N/M$. Then Gaschütz' theorem holds for $N$.
\end{enumerate}
\end{Prop}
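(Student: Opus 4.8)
For part (i) I would first apply \autoref{rose} to rephrase Rose's criterion for a group $A$ as the property that $A$ has a complement in every finite group in which it is normal. The forward implication is then transparent: given $N_1\unlhd G_1$, I embed it into $G:=G_1\times N_2\times\dots\times N_k$, pick a complement $D$ of $N:=N_1\times\dots\times N_k$ in $G$, and project $G$ onto $G_1$; the kernel $N_2\times\dots\times N_k$ meets $D$ inside $D\cap N=1$, so the projection is injective on $D$ and its image is a complement of $N_1$ in $G_1$, while $\Z(N_i)\le\Z(N)=1$. For the converse I would instead use the original form of the criterion: $\Z(N_i)=1$ gives $\Z(N)=1$, so it suffices to complement $\Inn(N)$ in $\Aut(N)$, and the Krull--Remak--Schmidt description of the automorphism group of a direct product of centreless groups --- factorwise automorphisms extended by the permutations of isomorphic indecomposable factors --- lets one assemble the given complements of $\Inn(N_i)$ in $\Aut(N_i)$ into a complement of $\Inn(N)$.

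Parts (ii), (iii) and (iv) share one reduction. Fix data $N\le H\le G$ witnessing Gaschütz' theorem for $N$, so $N\unlhd G$, $\gcd(|N|,|G:H|)=1$ and $N$ has a complement $K$ in $H$. As $M$ is characteristic in $N\unlhd G$, we have $M\unlhd G$, and $KM/M$ is a complement of $N/M$ in $H/M$ with $\gcd(|N/M|,|G:H|)=1$; since Gaschütz' theorem holds for $N/M$ there is $L$ with $M\le L\le G$, $G=NL$ and $N\cap L=M$. If $J$ complements $M$ in $L$, then $N\cap J=N\cap L\cap J=M\cap J=1$ and $|J|=|L|/|M|=|G|/|N|$, so $NJ=G$ and $J$ complements $N$ in $G$. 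Thus everything reduces to complementing $M$ in $L$, equivalently (by intersecting with $L$, as in the Dedekind remarks of \autoref{sec2}) to complementing $M$ in $G$. In (iii) this is immediate: $M$ satisfies Rose's criterion, hence by \autoref{rose} has a complement in the overgroup $L$. In (ii) one takes $M=N_1$; the product structure makes $N_1$ complemented in $H$ directly, by $K\cdot(N_2\times\dots\times N_k)$, so Gaschütz' theorem for $N_1$ (a hypothesis) complements $N_1$ in $G$, and an induction on $k$ disposes of $N/N_1$.

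Part (iv) is the heart of the matter, because there $M$ only satisfies Gaschütz' theorem --- via \autoref{sylow}, its Sylow subgroups being abelian --- rather than Rose's criterion, so I must build a complement of $M$ in $G$ by hand. The main obstacle is to pass from ``$M$ complemented in $N$'' to ``$M$ complemented in $H$'': the index $|H:N|=|K|$ may share primes with $|M|$, so Schur--Zassenhaus and a single use of Gaschütz' theorem both fail. To get around this, set $C:=\C_G(N)\unlhd G$; conjugation embeds $K/(K\cap NC)$ into $\Out(N)$, so $K_0:=K\cap NC$ has $|K:K_0|$ dividing $|\Out(N)|$, which is coprime to $|M|$. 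Put $H_0:=NK_0$, so that $M\le N\le H_0\le H$ and $\gcd(|M|,|H:H_0|)=\gcd(|M|,|K:K_0|)=1$. The gain from $K_0$ is that each of its elements induces an inner automorphism of $N$: writing $k=n_k c_k$ with $n_k\in N$, $c_k\in C$ shows $c_k\in\C_{H_0}(N)=:Z_0$, whence $H_0=NZ_0$ with $Z_0\unlhd H_0$ and $N\cap Z_0=\Z(N)$.

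It remains to complement $M$ in $H_0=NZ_0$. Let $D$ be a complement of $M$ in $N$, which exists by hypothesis. Because $\gcd(|M|,|\Z(N)|)=1$ and $M\cap\Z(N)=1$, the central subgroup $\Z(N)$ is the unique subgroup of its order in $M\Z(N)=M\times\Z(N)$; as $D\cap M\Z(N)$ is a complement of $M$ in $M\Z(N)$ and so has that order, we get $\Z(N)=D\cap M\Z(N)\le D$. Then $DZ_0$ is a complement of $M$ in $H_0$: it is a subgroup since $Z_0\unlhd H_0$; it satisfies $M\cdot DZ_0=NZ_0=H_0$; and if $m=dz$ with $m\in M$, $d\in D$, $z\in Z_0$, then $z=d^{-1}m\in N\cap Z_0=\Z(N)\le D$ forces $m\in M\cap D=1$. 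Two applications of Gaschütz' theorem for $M$ now finish the reduction: first along the coprime inclusion $H_0\le H$, then along $H\le G$ (using $\gcd(|M|,|G:H|)=1$), producing a complement $R$ of $M$ in $G$; intersecting, $R\cap L$ complements $M$ in $L$ and hence $N$ in $G$. The two coprimality hypotheses enter exactly once each: $\gcd(|M|,|\Out(N)|)=1$ makes the detour through $H_0$ legal, while $\gcd(|M|,|\Z(N)|)=1$ forces $\Z(N)\le D$, which is what makes $DZ_0$ meet $M$ trivially.
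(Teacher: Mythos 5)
Your proofs of (i) and (iii) coincide with the paper's own arguments: in (i) the paper obtains the complement of $N_1$ in $G_1$ by intersecting ($N_2\cdots N_kL\cap G_1$) rather than projecting, a cosmetic difference, and the converse direction is the same Krull--Remak--Schmidt assembly; (iii) is identical. Part (ii) also rests on the same two applications of Gaschütz' theorem for the factors, just organized differently: the paper reduces to $k=2$ and transports $N/N_1$ into a complement $L\cong G/N_1$ of $N_1$, whereas you split off $N/N_1$ by induction first and then intersect a complement of $N_1$ in $G$ with $L$. One step you must still justify: applying the inductive hypothesis to $N/N_1\cong N_2\times\cdots\times N_k$ requires $N_2,\ldots,N_k$ to be characteristic \emph{in that smaller product}, which does not follow formally from their being characteristic in $N$. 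It does hold, because every automorphism of $N_2\times\cdots\times N_k$ extends to an automorphism of $N$ acting trivially on $N_1$ and hence preserves each $N_i$; the paper records exactly this argument for its own reduction to $k=2$, so the gap is easily closed.

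Part (iv) is where you genuinely diverge, and your argument is correct. The paper stays inside $L$ and verifies the hypotheses of Šemetkov's theorem (\autoref{Shemetkov}) for $M\unlhd L$ prime by prime: for each $p\mid|M|$ it assembles a Sylow $p$-subgroup $P\rtimes QR$ of $G$, with $Q$ taken from the complement of $M$ in $N$ and $R\le\C_G(N)$, the hypothesis $\gcd(|M|,|\Z(N)|)=1$ forcing $P\cap QR=1$ and $\gcd(|M|,|\Out(N)|)=1$ ensuring no $p$-part escapes into $G/N\C_G(N)$. You instead manufacture one global complement: the detour through $H_0=NK_0$ with $K_0=K\cap N\C_G(N)$, whose index in $H$ divides $|\Out(N)|$, reduces the problem to complementing $M$ in $N\C_{H_0}(N)$, which your observation $\Z(N)\le D$ (forced by $\gcd(|M|,|\Z(N)|)=1$) solves explicitly via $D\C_{H_0}(N)$; then \autoref{sylow} --- legitimately available, since it is proved before \autoref{prop}, so there is no circularity --- lifts the complement along coprime indices from $H_0$ up to $G$, and intersecting with $L$ finishes. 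Your route buys a Sylow-free argument for (iv) at the price of invoking the main theorem as a black box; the paper's route is self-contained over Šemetkov's theorem. Pleasingly, the two coprimality hypotheses play exactly the same roles in both proofs.
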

\begin{proof}\hfill
\begin{enumerate}[(i)]
\item Suppose first that $N_1$ is a normal subgroup of a finite group $G$ such that $N_1$ has no complement in $G$. 
By way of contradiction, suppose that $L$ is a complement of $N_1\times\ldots\times N_k$ in $\hat{G}:=G\times N_2\times\ldots\times N_k$. Let $K:=N_2\ldots N_kL\cap G$. Then
\[N_1K=N_1\ldots N_kL\cap G=\hat{G}\cap G=G\]
and $N_1\cap K=N_1\cap N_2\ldots N_kL=1$, because every element of $\hat{G}$ can be written uniquely as $x_1\ldots x_ky$ with $x_i\in N_i$ and $y\in L$. But now $K$ is a complement of $N_1$ in $G$. Contradiction. Hence, if Rose's criterion holds for $N_1\times\ldots\times N_k$, then it holds for $N_1$ and by symmetry also for $N_1,\ldots,N_k$.

Assume conversely that $N_1,\ldots,N_k$ fulfill Rose's criterion. Then $\Z(N_1\times\ldots\times N_k)=\Z(N_1)\times\ldots\times\Z(N_k)=1$. By the first part of the proof, we may assume that each $N_i$ is indecomposable. Since $\Z(N_1)=\ldots=\Z(N_k)=1$, every automorphism of $N_1\times\ldots\times N_k$ permutes the $N_i$ (see \cite[Satz~I.12.6]{Huppert}). We may arrange the $N_i$ such that 
\[N_1\cong\ldots\cong N_{k_1}\not\cong N_{k_1+1}\cong\ldots\cong N_{k_1+k_2}\not\cong\ldots.\] 
Then $\Aut(N_1\times\ldots\times N_k)\cong\Aut(N_1^{k_1})\times\ldots\times\Aut(N_s^{k_s})$. In order to verify Rose's criterion for $N_1\times\ldots\times N_k$, we may assume that $k_1=k$, i.\,e. $N_1\cong\ldots\cong N_k$. In this case we obtain $\Aut(N_1^k)\cong\Aut(N_1)\wr S_k$. We identify $S_k$ with a subgroup of $\Aut(N_1^k)$. By hypothesis, there exists a complement $K_1$ of $\Inn(N_1)$ in $\Aut(N_1)$. It is easy to see that $\langle K_1,S_k\rangle\cong K_1\wr S_k$ is a complement of $\Inn(N_1^k)$ in $\Aut(N_1^k)$. 

\item Since every automorphism of $N_1\times\ldots\times N_{k-1}$ extends to an automorphism of $N$, it follows that $N_1$ is characteristic in $N_1\times\ldots\times N_{k-1}$. Hence, by induction on $k$, it suffices to consider the case $k=2$. 
Let $N\le H\le G$ such that $N\unlhd G$, $\gcd(|N|,|G:H|)=1$ and $N$ has a complement $K$ in $H$. Then $N_1$ and $N_2$ are normal in $G$, since they are characteristic in $N$. Moreover, $KN_2$ is a complement of $N_1$ in $H$, because 
\[N_1\cap KN_2=N_1\cap N\cap KN_2=N_1\cap(N\cap K)N_2=N_1\cap N_2=1.\]
Since $|N_1|$ is coprime to $|G:H|$, Gaschütz' theorem applied to $N_1$ yields a complement $L$ of $N_1$ in $G$. Now the canonical map $\phi:L\to G/N_1$, $x\mapsto xN_1$ is an isomorphism and we define $L_N:=\phi^{-1}(N/N_1)$ and $L_H:=\phi^{-1}(H/N_1)$. Since $KN_1/N_1$ is a complement of $N/N_1$ in $H/N_1$, also $L_N$ has a complement in $L_H$. Moreover, $|L:L_H|=|G:H|$ is coprime to $|L_N|=|N/N_1|$. Now Gaschütz' theorem applied to $L_N\cong N/N_1\cong N_2$ provides a complement $L_K$ of $L_N$ in $L$. Then $L_K\cap N\le L_K\cap L_N=1$ and $|L_K|=|L:L_N|=|G:N|$. Therefore, $L_K$ is a complement of $N$ in $G$. 

\item\label{four} Let $N\le H=N\rtimes K\le G$ as usual. Since $M$ is characteristic in $N$, we have $M\unlhd G$ and $KM/M$ is a complement of $N/M\unlhd H/M$. By Gaschütz' theorem, $N/M$ has a complement $L/M$ in $G/M$. By Rose's theorem, $M$ has a complement $\hat{K}$ in $L$. Now $G=LN=\hat{K}MN=\hat{K}N$ and 
\[\hat{K}\cap N=\hat{K}\cap L\cap N=\hat{K}\cap M=1.\]
Therefore, $\hat{K}$ is a complement of $N$ in $G$.

\item Let $N\le H\le G$ as usual. As in \eqref{four} we find $L\le G$ such that $G=NL$ and $N\cap L=M$. It suffices to show that $M$ has a complement in $L$. We do this using Šemetkov's theorem. Let $P$ be a non-trivial Sylow $p$-subgroup of $M$. 
Let $Q$ be a Sylow $p$-subgroup of a complement of $M$ in $N$ (which exists by hypothesis). By Sylow's theorem, we may assume that $Q$ normalizes $P$, so that $P\rtimes Q$ is a Sylow $p$-subgroup of $N$. Let $R$ be a Sylow $p$-subgroup of $\C_G(N)$. Then $QR$ is a $p$-subgroup of $N\C_G(N)$.
Let $x=st\in P\cap QR$ with $s\in Q$ and $t\in R$. Then $t=s^{-1}x\in N\cap\C_G(N)=\Z(N)$. Since $\Z(N)$ is a $p'$-group by hypothesis, we obtain $t=1$ and $x=s\in P\cap Q=1$. This shows that $P\cap QR=1$. Since $G/N\C_G(N)\le\Out(N)$ and $\Out(N)$ is a $p'$-group, $P\rtimes QR$ is a Sylow $p$-subgroup of $G$. 
Hence, $P$ also has a complement in a Sylow $p$-subgroup of $L$. Since $P$ is abelian, Šemetkov's theorem applies to $M$.
\qedhere
\end{enumerate}
\end{proof}

For instance, if all non-abelian minimal normal subgroups $M_1,\ldots,M_n$ of $N$ fulfill Rose's criterion and if all Sylow subgroups of $N/M_1\ldots M_n$ are abelian, then Gaschütz' theorem holds for $N$ by \autoref{sylow} and \autoref{prop}.

Using \cite[Satz~I.12.6]{Huppert}, it is easy to see that $N_1,\ldots,N_k$ are characteristic in $N=N_1\times\ldots\times N_k$ if and only if the following holds for all $i\ne j$:
\begin{enumerate}[(i)]
\item $N_i$ and $N_j$ have no common direct factor,
\item $\gcd\bigl(|N_i/N_i'|,|\Z(N_j)|\bigr)=1$.
\end{enumerate}

Concrete examples for \autoref{prop}\eqref{new} are groups of the form $N=P\rtimes Q$ where $P$ and $Q$ are abelian of order $9$ and $12$ respectively and $|\Z(N)|=2$ (there are four isomorphism types for $N$). Now \autoref{prop}\eqref{three} applies to $N\times C_7$ (while the other parts do not apply here). 

We now recall a theorem of Carter~\cite[Theorem~4]{CarterSplitting}, which generalizes work of Higman~\cite[Theorem~3]{HigmanSplitting}, Schenkman~\cite[Theorem~1]{SchenkmanSplitting} and Yonaha~\cite{Yonaha} (there is an even more general version by Shult~\cite{Shult} in terms of formations, which is presented in Huppert's book~\cite{Huppert}). 
Let $G^\infty$ be the \emph{nilpotent residue} of $G$, i.\,e. $G^\infty$ is the smallest normal subgroup of $G$ with nilpotent quotient $G/G^\infty$. Set $L_0(G):=G$ and $L_{n+1}(G):=L_n(G)^\infty$ for $n\ge 0$. Then $G=L_0(G)\ge L_1(G)\ge\ldots$ is sometimes called the \emph{lower nilpotent series} of $G$. Note that $G$ is solvable if and only if $L_n(G)=1$ for some $n$. 

\begin{Thm}[\textsc{Carter}]\label{carter}
Suppose that $L_n(G)$ is abelian for some $n\ge 0$. Then $L_n(G)$ has a complement in $G$ and all complements are conjugate. 
\end{Thm}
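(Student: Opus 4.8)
The plan is to reduce \autoref{carter} to the case $n=1$ and to push the resulting complement up the lower nilpotent series by a Frattini argument. First note that $A:=L_n(G)$ being abelian forces $L_{n+1}(G)=A^\infty=1$ (the nilpotent residue of an abelian group is trivial), so $G$ is solvable; moreover each $L_i(G)$ is characteristic, hence $A\unlhd G$. For $n=0$ the group $A=G$ is abelian and the trivial subgroup is its unique complement, so I assume $n\ge1$ and set $B:=L_{n-1}(G)$, a characteristic subgroup of $G$ with $B^\infty=L_n(G)=A$ abelian.

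The heart of the matter is the following base case: \emph{if $B^\infty$ is abelian, then the complements of $B^\infty$ in $B$ are precisely the Carter subgroups of $B$ (the self-normalizing nilpotent subgroups); in particular they exist and are all conjugate.} I would take the existence and conjugacy of Carter subgroups in finite solvable groups as classical (see, e.g.,~\cite{Huppert}). Writing $A=B^\infty$, the minimality of the residue gives $B/[A,B]$ nilpotent (a central extension of the nilpotent group $B/A$), whence $[A,B]=A$; equivalently the coinvariants of $A$ under the conjugation action of $Q:=B/\C_B(A)$ vanish. I would then invoke the module lemma that for a finite nilpotent group $Q$ acting on a finite abelian group $A$ one has $A^Q=1$ whenever the coinvariants $A/[A,Q]$ are trivial. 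Granting this, $\C_A(B)=A\cap\Z(B)=A^Q=1$. Now if $C$ is a Carter subgroup of $B$ then $CA=B$ (as $C$ covers the nilpotent quotient $B/A$), and $D:=C\cap A$ is normal in $B=AC$; so if $D\ne1$ a nontrivial element of $D\cap\Z(C)$ would lie in $A\cap\Z(B)=1$, a contradiction. Hence $C\cap A=1$ and $C$ is a complement. Conversely, the same computation $\C_A(K)=A^Q=1$ shows that every complement $K$ is self-normalizing, hence a Carter subgroup.

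With the base case in hand I would finish as follows. Applying it to $B$ yields a Carter subgroup $C$ of $B$ with $C\cap A=1$ and $CA=B$. Since $B\unlhd G$ and Carter subgroups of $B$ are conjugate in $B$, the Frattini argument gives $G=B\,\N_G(C)=A\,\N_G(C)$; and any element of $A\cap\N_G(C)$ lies in $B\cap\N_G(C)=\N_B(C)=C$, so $A\cap\N_G(C)\le A\cap C=1$. Thus $\N_G(C)$ is a complement of $A$ in $G$. For conjugacy, given two complements $K_1,K_2$ of $A$ in $G$, the Dedekind law makes each $B\cap K_i$ a complement of $A$ in $B$, hence a conjugate Carter subgroup of $B$; after conjugating we may assume $B\cap K_1=B\cap K_2=C$, and since $B\cap K_i\unlhd K_i$ both $K_i$ lie in $\N_G(C)$ and have order $|G:A|=|\N_G(C)|$, forcing $K_1=\N_G(C)=K_2$.

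The main obstacle is the base case, and within it the identity $A\cap\Z(B)=1$. This is exactly where abelianness of the residue is used: for nonabelian residues it can fail (e.g. $B=\SL(2,3)$ has $B^\infty=Q_8$ with $B^\infty\cap\Z(B)\ne1$). I would prove the module lemma by reducing to a $p$-group $A$, splitting off the coprime Hall $p'$-part $U$ of $Q$ via Maschke (so that $A^U=A_U$ as modules over the Sylow $p$-subgroup $T$ of $Q$), and then using that a $p$-group $T$ acting on a nonzero finite $p$-group has both nonzero invariants and nonzero coinvariants; thus vanishing of $A_Q$ forces $A^U=0$ and hence $A^Q=0$. The remaining routine points—that the $L_i(G)$ are characteristic, that residues commute with quotients, and the order bookkeeping in the Dedekind and Frattini steps—I would treat as standard.
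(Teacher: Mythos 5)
Your argument is essentially correct, but there is nothing in the paper to compare it against: \autoref{carter} is quoted from Carter's article \cite[Theorem~4]{CarterSplitting} without proof, so your proposal supplies an argument where the paper relies on a citation. Your route---identifying the complements of $A=B^\infty$ in $B$ with the Carter subgroups of $B$, then transferring existence and conjugacy from $B=L_{n-1}(G)$ to $G$ by the Frattini argument and an order count---is in fact the classical one, and the steps check out: $L_{n+1}(G)=1$ gives solvability of $G$, so Carter subgroups of $B$ exist and are conjugate; $[A,B]=A$ holds because $B/[A,B]$ is a central extension of the nilpotent group $B/A$; your module lemma is correct as sketched (primary decomposition, the $T$-invariant coprime splitting $A=[A,U]\oplus A^U$, and nonvanishing of coinvariants for $p$-groups acting on nontrivial $p$-groups), and the group $Q=B/\C_B(A)$ to which you apply it is indeed nilpotent since $A\le\C_B(A)$---note this is a second place where commutativity of $A$ enters. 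The resulting identity $A\cap\Z(B)=1$ is precisely \cite[Theorem~1]{CarterSplitting}, which the paper quotes separately in the remark following \autoref{yonaha}, so your module lemma recovers that statement as a bonus. One input should be declared explicitly: besides existence and conjugacy of Carter subgroups you also use their covering property, namely $CA=B$, which needs that the image of a Carter subgroup in $B/A$ is again a Carter subgroup together with the fact that a nilpotent group has no proper self-normalizing subgroup; this is classical as well (see \cite{Huppert}), but it is a third ingredient beyond the two you list. What your approach buys is a self-contained proof (modulo Carter subgroup theory) covering in particular the case $n\le 1$ that the paper actually needs for \autoref{yonaha}; what the citation buys the paper is brevity, since redeveloping the theory of self-normalizing nilpotent subgroups would be disproportionate to its use here.
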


For our purpose we also require that $G/L_n(G)$ is abelian (hence $n\le 1$). 
Recall that a group $G$ is called \emph{metabelian} if $G'$ is abelian, i.\,e. $G''=1$. 

\begin{Cor}[\textsc{Yonaha}]\label{yonaha}
Let $G$ be a metabelian group such that $\Z(G)\cap G'=1$. Then $G'$ has a complement in $G$ and all such complements are conjugate.
\end{Cor}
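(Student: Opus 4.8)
The plan is to deduce the statement from Carter's theorem (\autoref{carter}) by showing that the nilpotent residue $L_1(G)=G^\infty$ coincides with $G'$. Since $G$ is metabelian, $A:=G'$ is abelian and $Q:=G/G'$ is abelian; moreover the conjugation action of $G$ on $A$ factors through $Q$, because $A$ is abelian and normal. Under this action the hypothesis reads $\C_A(Q)=\Z(G)\cap G'=1$, i.e. $A$ has no nontrivial $Q$-fixed points. Once I know $L_1(G)=G'$, the subgroup $L_1(G)$ is abelian and \autoref{carter} with $n=1$ immediately yields both a complement of $G'$ in $G$ and the conjugacy of all such complements (note $G/L_1(G)=G/G'$ is abelian, so we are in the regime $n\le 1$ envisaged above).

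To identify $L_1(G)$ I would use the lower central series $G=\gamma_1(G)\ge\gamma_2(G)=G'\ge\ldots$, recalling that for a finite group the nilpotent residue equals its stable term, $G^\infty=\bigcap_n\gamma_n(G)$. If I can show $[G,G']=G'$, then $\gamma_3(G)=[G,\gamma_2(G)]=[G,G']=G'=\gamma_2(G)$, and inductively $\gamma_n(G)=G'$ for every $n\ge 2$; hence the series stabilises at $G'$ and $G^\infty=G'$. Because $A=G'$ is abelian, $[G,G']$ equals the subgroup $[A,Q]$ generated by the commutators of $A$ with $Q$. Thus everything reduces to the purely module-theoretic claim that $\C_A(Q)=1$ forces $[A,Q]=A$.

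This claim is the heart of the argument, and it is exactly where the commutativity of $Q$ (equivalently, the metabelian hypothesis on $G$) enters; it is false for general $Q$, so tracking its use is the main obstacle. To prove it I would first reduce to the case that $A$ is a $p$-group, treating each primary component separately, since $Q$ preserves each $A_p$ and both $\C_A(Q)$ and $[A,Q]$ decompose accordingly. As $Q$ is abelian it splits as $Q=Q_p\times R$ with $R$ a Hall $p'$-subgroup. Then $Q_p$ preserves $B:=\C_A(R)$, and $\C_A(Q)=B^{Q_p}$; since a $p$-group acting on a nontrivial finite $p$-group has nontrivial fixed points, the assumption $\C_A(Q)=1$ forces $B=\C_A(R)=1$. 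Now $R$ is a $p'$-group acting coprimely on the abelian $p$-group $A$, so $A=\C_A(R)\times[A,R]$; with $\C_A(R)=1$ this gives $A=[A,R]\le[A,Q]$, hence $[A,Q]=A$, as required. Feeding this back into the previous paragraph yields $G^\infty=G'$ and closes the argument through \autoref{carter}.
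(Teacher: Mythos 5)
Your proof is correct, and while it shares the paper's overall skeleton (reduce to showing $G^\infty=G'$, then invoke \autoref{carter}), the key step is carried out by a genuinely different argument. The paper proves $G^\infty=G'$ group-theoretically: it picks a minimal supplement $H$ of $G^\infty$ in $G$, uses \autoref{lem} to get $H\cap G^\infty\le\Phi(H)$, concludes that $H$ is nilpotent, and then kills $H\cap G'$ by a center argument (a nontrivial normal subgroup of the nilpotent group $H$ meets $\Z(H)$, and such elements land in $\Z(G)\cap G'=1$ because $G=G'H$ and $G'$ is abelian); comparing orders gives $G^\infty=G'$, and the same $H$ is then itself a complement, so Carter is needed only for conjugacy. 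You instead treat $A=G'$ as a module for the abelian group $Q=G/G'$ and prove the purely module-theoretic statement that $\C_A(Q)=1$ forces $[A,Q]=A$, via primary decomposition, the splitting $Q=Q_p\times R$, fixed points of $p$-groups on $p$-groups, and the coprime Fitting decomposition $A=\C_A(R)\times[A,R]$; this stabilizes the lower central series at $G'$, and you then rely on Carter for both existence and conjugacy. Your route has the virtue of isolating exactly where commutativity of $Q$ enters (your fixed-point claim is indeed false for non-abelian $Q$, e.g. $S_3$ acting on $C_3^2$ through upper-triangular and diagonal matrices), and it uses only standard coprime-action facts; the paper's route is shorter given that \autoref{lem} is already available for other purposes, and it produces the complement explicitly rather than through Carter's theorem. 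Both are complete proofs.
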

\begin{proof}
By \autoref{carter}, it suffices to show that $G'=L_1(G)=G^\infty$. Clearly, $G^\infty\le G'$. Let $H\le G$ minimal such that $G=G^\infty H$. Then $H\cap G^\infty\le\Phi(H)$ by \autoref{lem}. Since $G/(H\cap G^\infty)\cong HG^\infty/G^\infty=G/G^\infty$ is nilpotent, it follows that $H$ is nilpotent (see \cite[Lemma~9.19]{IsaacsGroup}). Suppose that $N:=H\cap G'\ne 1$. Since $N\unlhd H$, also $M:=N\cap\Z(H)\ne 1$. Now $M$ is centralized by $H$ and by $G'$ since $G'$ is abelian. Hence, $M$ is centralized by $HG'=G$. This yields the contradiction $M\le G'\cap\Z(G)=1$. Therefore, $H\cap G'=1=H\cap G^\infty$ and $|G'|=|G'H:H|=|G:H|=|G^\infty H:H|=|G^\infty|$.
This shows not only that $G^\infty=G'$, but also that $G'$ has a complement. For the uniqueness we refer to \autoref{carter}.
\end{proof}

We remark that, conversely, if $G^\infty$ is abelian, then $G^\infty\cap\Z(G)=1$ by \cite[Theorem~1]{CarterSplitting}.

\begin{Thm}\label{metabel}
If $N$ is metabelian and $N'\cap\Z(N)=1$, then Gaschütz' theorem holds for $N$.
\end{Thm}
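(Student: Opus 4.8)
The plan is to split the two abelian layers $N'$ and $N/N'$ of $N$ separately, using \autoref{yonaha} to handle the bottom layer and \autoref{Ga} for the top one. Fix the usual data: $N\le H\le G$ with $N\unlhd G$, $\gcd(|N|,|G:H|)=1$, and a complement $K$ of $N$ in $H$. Since $N$ is metabelian, $N'$ is abelian and characteristic in $N$, hence $N'\unlhd G$. Because $\Z(N)\cap N'=1$, \autoref{yonaha} applies to $N$ itself: $N'=N^\infty$ has a complement $C_0$ in $N$ (so $N=N'\rtimes C_0$) and, crucially, \emph{all} complements of $N'$ in $N$ are conjugate in $N$. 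I would first record the consequence of the hypothesis that drives everything: if $a\in N'$ normalizes $C_0$, a short computation shows $a\in\C_{N'}(C_0)=\C_{N'}(N)=N'\cap\Z(N)=1$, so $\N_{N'}(C_0)=1$ and therefore $\N_N(C_0)=C_0$. In other words, $N$ acts on the set $\Omega$ of complements of $N'$ in $N$ with point stabiliser exactly $C_0$.

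The key step is to produce a complement of $N'$ (not yet of $N$) in all of $G$. Here the tempting idea of enlarging the given $K$ is bound to fail --- already for $N=Q_8$ a complement in $H$ need not extend to one in $G$ --- so instead I would run a Frattini argument. Conjugation makes $G$ act on $\Omega$ (it normalises both $N$ and $N'$), and by the conjugacy assertion of \autoref{yonaha} the normal subgroup $N$ is already transitive on $\Omega$. The Frattini argument then yields $G=N\,\N_G(C_0)$. Writing $J:=\N_G(C_0)$, we get $J\cap N=\N_N(C_0)=C_0$, whence $J\cap N'=C_0\cap N'=1$ and $|J|=|G|\,|C_0|/|N|=|G|/|N'|$; an order count gives $N'J=G$, so $J$ is a complement of $N'$ in $G$. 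I expect this to be the crux, and it is exactly here that $\Z(N)\cap N'=1$ is indispensable: it is what forces $\N_N(C_0)=C_0$, i.e. the regular-type action needed for the bookkeeping (for $Q_8$ the subgroup $N'=\Z(N)$ has no complement at all, so the argument correctly breaks down).

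It remains to convert a complement of $N'$ into a complement of $N$, and this is where the top layer enters. Since $N/N'$ is abelian, $KN'/N'$ complements $N/N'$ in $H/N'$ and $\gcd(|N/N'|,|G:H|)=1$, so \autoref{Ga} produces a complement $L/N'$ of $N/N'$ in $G/N'$; pulling back gives $G=LN$ and $L\cap N=N'$ with $N'\le L$. The Dedekind law then finishes the proof: because $N'\le L$ we have $N'(J\cap L)=N'J\cap L=L$ and $N'\cap(J\cap L)=1$, so $B:=J\cap L$ complements $N'$ in $L$; consequently $B\cap N=(J\cap N)\cap(L\cap N)=C_0\cap N'=1$ and $|B|=|L|/|N'|=|G:N|$, whence $BN=G$. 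Thus $B=J\cap L$ is a complement of $N$ in $G$. The only genuine work is the Frattini step of the middle paragraph; the two outer reductions are formal once \autoref{yonaha} and \autoref{Ga} are available, and notably no appeal to \autoref{Shemetkov} is needed.
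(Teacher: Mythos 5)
Your proof is correct and is essentially the paper's own argument: both apply \autoref{yonaha} to get a complement of $N'$ in $N$, use its conjugacy statement in a Frattini argument plus the same commutator computation (where $N'\cap\Z(N)=1$ and the abelianness of $N'$ force $\N_{N'}(C_0)=1$) to show $\N_G(C_0)$ complements $N'$ in $G$, then apply \autoref{Ga} to $N/N'$ and intersect the resulting subgroup $L$ with that complement. The only cosmetic difference is that you record $\N_N(C_0)=C_0$ explicitly and spell out the final Dedekind/order count, whereas the paper verifies $N'\cap\N_G(K)=1$ directly and cites the restriction-of-complements fact from \autoref{sec2}.
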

\begin{proof} 
Suppose that $N\le H\le G$ such that $N\unlhd G$, $\gcd(|N|,|G:H|)=1$ and $N$ has a complement in $H$.
By \autoref{yonaha}, $M:=N'$ has a complement $K$ in $N$ and all such complements are conjugate in $N$. The Frattini argument implies that $G=N\N_G(K)=M\N_G(K)$. 
For $x\in M\cap\N_G(K)$ and $y\in K$ we have $[x,y]\in M\cap K=1$. Hence, 
\[x\in M\cap\C_N(K)=N'\cap\Z(N)=1,\] 
because $M$ is abelian. Therefore, $\N_G(K)$ is a complement of $M$ in $G$. 
Since $N/M$ is abelian, Gaschütz' theorem applied to $N/M\le H/M\le G/M$ yields $G=NL$ with $N\cap L=M$ as usual. As in the proof of \autoref{prop}, it suffices to show that $M$ has a complement in $L$. But this is clear, since $M$ has a complement in $G$.
\end{proof}

Surely the proof of \autoref{metabel} can be adapted to similar situations (using the Schur--Zassenhaus theorem instead of \autoref{yonaha} for instance).

For the sake of completeness we also address the dual of Rose's theorem which is probably known to experts in cohomology.

\begin{Thm}\label{dual}
For every finite group $K\ne 1$ there exist finite groups $N\unlhd G$ such that $G/N\cong K$ and $N$ has no complement in $G$.
\end{Thm}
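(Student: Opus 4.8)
The statement asserts the existence, for each nontrivial finite $K$, of a non-split extension $1\to N\to G\to K\to1$; indeed ``$N$ has a complement in $G$'' is exactly the assertion that this extension splits. The plan is to take $N=A$ \emph{abelian}, so that isomorphism classes of extensions of $K$ by $A$ with a fixed action are classified by $\cohom^2(K,A)$, and a complement exists if and only if the corresponding class vanishes. Thus it suffices to exhibit, for every $K\ne1$, a finite $K$-module $A$ with $\cohom^2(K,A)\ne0$; the group $G$ attached to any nonzero class then has $G/N\cong K$ and admits no complement. One may even arrange $A\le\Phi(G)$, which gives a second, purely group-theoretic reason for the non-existence of a complement, in the spirit of \autoref{lem}.

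Two special cases are immediate and motivate the argument. If $K$ is not perfect, then $K/K'\ne1$ and a short computation gives $\cohom^2(K,\ZZ)\cong\cohom^1(K,\QQ/\ZZ)\cong\Hom(K,\QQ/\ZZ)\cong K/K'\ne0$ for the trivial module $\ZZ$; concretely one may choose a cyclic quotient $K\twoheadrightarrow C_{p^a}$ with $a$ maximal and pull back the non-split central extension $C_{p^{a+1}}\twoheadrightarrow C_{p^a}$, the maximality of $a$ forcing non-splitness. If $K$ is perfect with nontrivial Schur multiplier $M(K)=\cohom^2(K,\CC^\times)\ne1$, then its Schur cover $1\to M(K)\to\tilde K\to K\to1$ is a non-split central extension, since a nontrivial central extension of a perfect group never splits.

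The genuine obstacle is the remaining case, in which $K$ is \emph{superperfect}, i.e. perfect with trivial Schur multiplier (for example $K=\SL(2,5)$); here neither the abelianization nor a central extension is available, so a \emph{nontrivial} coefficient module is unavoidable. I would handle all cases uniformly as follows. Fix a prime $p\mid|K|$ and work with finite $\FF_p K$-modules. By the standard fact that the mod-$p$ cohomology of a finite group with $p\mid|K|$ is nonzero in infinitely many positive degrees, choose $n\ge2$ with $\cohom^n(K,\FF_p)\ne0$. Using the short exact sequence $0\to\FF_p\to\FF_p[K]\to C\to0$ given by the norm element together with the cohomological triviality of the finite free module $\FF_p[K]$, dimension shifting $n-2$ times produces a finite $\FF_p K$-module $A$ with $\cohom^2(K,A)\cong\cohom^n(K,\FF_p)\ne0$. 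Taking $G$ to be the extension of $K$ by $A$ determined by a nonzero class completes the construction: $G$ is finite, $G/N\cong K$ with $N=A$, and $N$ has no complement. The only point requiring care is that every module occurring in the shift be finite, which is guaranteed by working over $\FF_p$ with the finite free module $\FF_p[K]$ throughout.
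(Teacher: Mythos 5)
Your proof is correct, and it takes a genuinely different route from the paper's. The paper works with bare hands: writing $K=F/R$ with $F$ free, it uses the Nielsen--Schreier theorem to find $P_1\unlhd P$ with $P_1\le R$ and $P/P_1\cong C_{p^2}$, where $P$ is the preimage in $F$ of a subgroup of order $p$ of $K$, and then passes to the finite quotient $G=F/Q$ by the kernel $Q$ of the action of $F$ on the cosets $F/P_1$; a complement of $N=R/Q$ would induce a complement of the unique subgroup of order $p$ in $P/P_1\cong C_{p^2}$, which is absurd. This is uniform in $K$, entirely elementary, and its kernel $N$ is in general non-abelian. You instead reduce to exhibiting a finite $\FF_p[K]$-module $A$ with $\cohom^2(K,A)\ne 0$, split into three cases. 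Two caveats. First, in the perfect case your justification ``a nontrivial central extension of a perfect group never splits'' is false as stated ($K\times C_p$ is such an extension and splits); what is true, and what you need, is that the Schur cover is a \emph{stem} extension, $M(K)\le\tilde K'$ (equivalently, $\tilde K$ is perfect), and a stem extension with nontrivial kernel cannot split. Second, and more significant in terms of weight: the superperfect case rests on the nonvanishing of $\cohom^n(K,\FF_p)$ in arbitrarily high degrees. That statement is true, but it is a genuinely deep theorem (Evens--Venkov finite generation, or Quillen's stratification), far heavier than anything in the paper; the dimension shifting itself is sound, since each successive cokernel embeds into a finite free module (an induced module with diagonal action is free), so all modules stay finite. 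Note that both the case distinction and the deep input are avoidable within your own cohomological framework: as the paper remarks parenthetically, Shapiro's lemma gives a one-line proof. Take $C\le K$ of order $p$ by Cauchy's theorem; the finite coinduced module $A:=\Hom_{\FF_p[C]}(\FF_p[K],\FF_p)$ satisfies $\cohom^2(K,A)\cong\cohom^2(C,\FF_p)\ne 0$, uniformly for every $K\ne 1$. In summary, your route buys the structural picture (extension classes, and the natural division into abelianization, Schur multiplier, and higher cohomology), while the paper's free-group construction --- or the Shapiro variant --- buys brevity, uniformity and elementariness.
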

\begin{proof}
Again it was Gaschütz~\cite{GaschutzFrattini} who proved a stronger statement where $N$ is required to lie in the Frattini subgroup of $G$ (then $G$ is called a \emph{Frattini extension} of $K$). The following arguments are inspired by \cite[Theorem~B.11.8]{DoerkHawkes}. (A cohomological proof can be given with Shapiro's lemma, see \cite[Proposition 9.76]{RotmanHom}.) Let $K=F/R$ where $F$ is a free group of finite rank and $R\unlhd F$. Let $P/R\le F/R$ be a subgroup of prime order $p$ (exists since $K\ne 1$). By the Nielsen--Schreier theorem, $P$ is free and $P/P'$ is free abelian of finite rank. Therefore we find $P_1\unlhd P$ with $P_1\le R$ and $P/P_1\cong C_{p^2}$. Let $Q\unlhd F$ be the kernel of the permutation action of $F$ on the cosets $F/P_1$. Then $Q\le P_1$ and $|F:Q|\le|F:P_1|!<\infty$. 

Define $G:=F/Q$ and $N:=R/Q$. Clearly, $G/N\cong F/R\cong K$. Suppose that $N$ has a complement $H/Q$ in $G$. Then $(H\cap P)/Q$ is a complement of $N$ in $P/Q$. Moreover, $(H\cap P)P_1/P_1$ is a complement of $R/P_1$ in $P/P_1$. But this is impossible since $P/P_1\cong C_{p^2}$. 
\end{proof}

The situation of \autoref{dual} is different for infinite groups: Every group $K$ is a quotient of a free group $F$. If $F$ splits, then $K$ is a subgroup of $F$ and therefore free by the Nielsen--Schreier theorem. Conversely, by the universal property of free groups, every group extension with a free quotient $K$ splits (including the case $K=1$).

We use the opportunity to mention a result in the opposite direction by Gaschütz and Eick~\cite{EickGa}:
\begin{Thm}[\textsc{Gaschütz}, \textsc{Eick}]
For a finite group $N$ the following assertions are equivalent:
\begin{enumerate}[(i)]
\item There exists a finite group $G$ with $N\unlhd G$ such that $NH<G$ for all $H<G$.
\item There exists a finite group $G$ with $N=\Phi(G)$.
\item $\Inn(N)\le\Phi(\Aut(N))$.
\end{enumerate}
\end{Thm}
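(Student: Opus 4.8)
The plan is to prove the cycle (ii)$\Rightarrow$(i)$\Rightarrow$(iii)$\Rightarrow$(ii). First I would record that (i) is equivalent to the existence of a finite group $G$ with $N\unlhd G$ and $N\le\Phi(G)$: if $N\le\Phi(G)$ and $H<G$, then a maximal subgroup $M\ge H$ satisfies $N\le M$, so $NH\le M<G$; conversely, if $NH<G$ for all $H<G$, then for each maximal $M<G$ we get $NM<G$, whence $NM=M$ and $N\le M$, so $N\le\Phi(G)$. In particular (ii)$\Rightarrow$(i) is immediate, since $\Phi(G)\unlhd G$ and $\Phi(G)\le\Phi(G)$.

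For (i)$\Rightarrow$(iii) I would pass to the conjugation action $\phi\colon G\to\Aut(N)$, whose kernel is $\C_G(N)$ and whose restriction to $N$ has image $\Inn(N)$. Writing $\overline{G}:=\phi(G)\cong G/\C_G(N)$, the standard fact that the image of the Frattini subgroup under a surjection lies in the Frattini subgroup of the image gives $\Inn(N)=\phi(N)\le\phi(\Phi(G))\le\Phi(\overline{G})$. The remaining step is a bridging lemma: if $A$ is finite, $I\unlhd A$ and $I\le\overline{G}\le A$ with $I\le\Phi(\overline{G})$, then $I\le\Phi(A)$. To see this, suppose some maximal $M<A$ omits $I$; normality gives $IM=A$, and Dedekind's law yields $\overline{G}=I(\overline{G}\cap M)$, so the non-generators $I$ may be dropped and $\overline{G}=\overline{G}\cap M\le M$, whence $I\le\overline{G}\le M$, a contradiction. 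Applying this with $A=\Aut(N)$ and $I=\Inn(N)$ gives (iii).

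The hard direction is (iii)$\Rightarrow$(ii), the explicit realisation of $N$ as a Frattini subgroup. Since $\Phi(\Aut(N))$ is nilpotent, (iii) forces $\Inn(N)\cong N/\Z(N)$ to be nilpotent, hence $N$ is nilpotent. Writing $N=P_1\times\ldots\times P_k$ for its (characteristic) Sylow subgroups, one has $\Aut(N)=\Aut(P_1)\times\ldots\times\Aut(P_k)$, and correspondingly $\Inn(N)$ and $\Phi(\Aut(N))$ factor, so (iii) holds for each $P_i$. As $\Phi$ respects finite direct products, it suffices to treat the case that $N$ is a $p$-group and then assemble $G=G_1\times\ldots\times G_k$ from groups with $\Phi(G_i)=P_i$. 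For a $p$-group $N$ I would build $G$ as a non-split extension of $N$ realising enough of the action of $\Aut(N)$ on $N$ together with a suitable central module part, in the spirit of Gaschütz' Frattini-extension construction~\cite{GaschutzFrattini}, arranged so that $G/N$ has trivial Frattini subgroup while $N$ remains a set of non-generators.

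I expect the main obstacle to be the \emph{exactness} $\Phi(G)=N$ rather than merely $N\le\Phi(G)$. The inclusion $N\le\Phi(G)$ is governed by condition (iii) through the bridging lemma above, but preventing $\Phi(G)$ from growing beyond $N$ is delicate: the abelian example $C_{p^n}=\Phi(C_{p^{n+1}})$ shows that the relevant non-generators need not come from the automorphism action at all, but from the extension itself, where $\C_G(N)$ is far larger than $\Z(N)$. Thus the construction must balance two ingredients---the automorphism action, controlled by (iii), and the module/extension data, as in the cyclic example---and the cohomological input needed to realise a non-split extension with the prescribed outer action, while ensuring that no spurious non-generators appear, is the crux of the argument.
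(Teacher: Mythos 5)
A preliminary remark: the paper does not prove this theorem at all — it is quoted as background, with the proof residing in Eick's paper \cite{EickGa} — so your proposal can only be measured against that original argument, not against anything in this paper. With that said, the two implications you do prove are correct and complete. Your observation that (i) is equivalent to the existence of a finite group $G$ with $N\unlhd G$ and $N\le\Phi(G)$ is right and makes (ii)$\Rightarrow$(i) immediate; and for (i)$\Rightarrow$(iii) your two ingredients — that an epimorphism of finite groups carries the Frattini subgroup into the Frattini subgroup of the image, and the bridging lemma that $I\unlhd A$, $I\le\overline{G}\le A$ and $I\le\Phi(\overline{G})$ force $I\le\Phi(A)$ — are both standard facts, and your Dedekind/non-generator proof of the lemma is correct.

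The genuine gap is (iii)$\Rightarrow$(ii), and it is not a small one: it is the theorem. The implications you established constitute the classical half known to Gaschütz in the 1950s; Eick's paper exists precisely because the direction (iii)$\Rightarrow$(ii) was open (hence its title, \emph{The converse of a theorem of W.\ Gasch\"utz on Frattini subgroups}). Your preliminary reduction is sound — $\Phi(\Aut(N))$ nilpotent forces $N$ nilpotent, the Sylow factors are characteristic, $\Aut$ and $\Phi$ both respect the resulting direct decomposition — but what remains after the reduction, namely constructing for a $p$-group $N$ with $\Inn(N)\le\Phi(\Aut(N))$ a finite group $G$ with $\Phi(G)=N$ exactly, is where all of the content lies, and your closing paragraph about building ``a non-split extension in the spirit of Gasch\"utz' Frattini-extension construction \ldots so that no spurious non-generators appear'' describes the goal rather than achieving it. Note moreover that the machinery of \cite{GaschutzFrattini}, which this paper uses in the proof of \autoref{dual}, produces Frattini extensions with a \emph{prescribed quotient}, not with a prescribed kernel; turning it into a construction that prescribes the kernel $N$ and pins down $\Phi(G)$ exactly (your ``exactness'' worry is indeed the right one) is Eick's contribution and occupies the bulk of \cite{EickGa}. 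As it stands, you have proved only the half of the equivalence that predates Eick's work — as you candidly acknowledge yourself.
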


Many more complement theorems can be found in Kirtland's recent book~\cite{Kirtland}.

\section{Some non-existence theorems}\label{sec3}

In the proof of \autoref{thmgeneral} we have already mentioned \cite[Theorem~IV.2.2]{Huppert}, which implies that
\[\Z(G)\cap G'\cap P\le P'\] 
for every finite group $G$ with Sylow subgroup $P$. Hence, in the situation of \autoref{sylow} we have $\Z(G)\cap G'=1$. Our main theorem shows that this is in fact a necessary condition for Gaschütz' theorem.
Recall from the introduction that Gaschütz' theorem holds for $N$ if for every embedding $N\le H\le G$ such that $N\unlhd G$, $\gcd(|N|,|G:H|)=1$ and $N$ has a complement in $H$, then $N$ has a complement in $G$.

\begin{Thm}\label{ZNthm}
Let $N$ be a finite group such that $\Z(N)\cap N'\ne 1$. Then for every integer $q>1$ coprime to $|N|$ there exist groups $N\le H\le G$ with the following properties: 
\begin{enumerate}[(i)]
\item $N\unlhd G$ and $H\unlhd G$.
\item $N$ has a complement in $H$, but not in $G$.
\item $H$ and $N$ have the same composition factors (up to multiplicities) and $G/H$ is cyclic of order $q$.
\end{enumerate}
In particular, Gaschütz' theorem does not hold for $N$.
\end{Thm}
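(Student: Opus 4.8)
The guiding idea is to exploit the coprimality. Any $H$ I build will have $|H/N|$ divisible only by primes dividing $|N|$ (to meet (iii)), so the extension $1\to H\to G\to C_q\to1$ has coprime orders and the Schur--Zassenhaus theorem forces $G=H\rtimes\langle t\rangle$ with $t^q=1$. A short Dedekind/Sylow argument (as in the remark opening \autoref{sec2}, combined with conjugacy of Sylow $q$-subgroups) then shows that $N$ has a complement in $G$ if and only if some complement of $N$ in $H$ is $\langle t\rangle$-invariant. Thus the whole problem reduces to: construct $H\unlhd G$ with $N\le H$, $H/N$ a $p$-group, and a $\langle t\rangle$-action on $H$ so that $N$ has a complement in $H$ but $\langle t\rangle$ permutes the (nonempty) set of these complements \emph{without a fixed point}.

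Here the hypothesis enters. Fix $z\in\Z(N)\cap N'$ of prime order $p$, so $p\mid|N|$ and $p\nmid q$. Since $z\in N'$, the central subgroup $\langle z\rangle$ has no complement in $N$, and $z$ is a product of commutators in $N$. I would take $H:=N*_{z=w}P=(N\times P)/\langle(z,w^{-1})\rangle$, a central product with a finite $p$-group $P$ carrying a distinguished central element $w\in\Z(P)\cap P'$ of order $p$. Because $[N,P]=1$ in $H$ and $H/N\cong P/\langle w\rangle$, writing a section of $H\to H/N$ as $x\mapsto\nu(x)x$ with $x\in P$, $\nu(x)\in N$ shows that complements of $N$ in $H$ correspond precisely to homomorphisms $\nu\colon P\to N$ with $\nu(w)=z^{-1}$. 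Any such $\nu$ gives $z^{-1}=\nu(w)\in\nu(P')\le N'$, which is exactly where $z\in N'$ is indispensable; conversely, realizing $z$ as a product of commutators lets me build $P$ together with one homomorphism $\nu_0$ of this kind, so complements of $N$ in $H$ do exist.

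To destroy all invariant complements I let $\langle t\rangle$ act on $P$ fixing $w$ and act trivially on $N$ (this is consistent with, and unavoidable when $\gcd(q,|\Aut N|)=1$, since then the conjugation action of $t$ on $N$ has order dividing $\gcd(q,|\Aut N|)=1$). One checks directly that conjugation by $t$ sends the complement attached to $\nu$ to the one attached to $\nu\circ\tau^{-1}$, where $\tau$ is the chosen automorphism of $P$. I would choose $P$ with enough $C_q$-symmetry --- essentially $q$ cyclically interchanged copies of the commutator data realizing $z$ --- so that a homomorphism $\nu$ ``concentrated on one copy'' and its $q$ translates form a free $\langle t\rangle$-orbit of pairwise non-conjugate complements, exactly as $\langle ic\rangle,\langle jc\rangle,\langle kc\rangle$ are permuted cyclically in Baer's example $\SL(2,3)*C_4$. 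Setting $G:=H\rtimes\langle t\rangle$, properties (i)--(iii) are then routine: $N$ and $H$ are normal, $G/H\cong C_q$, and $H/N\cong P/\langle w\rangle$ is a $p$-group, so $H$ and $N$ have the same composition factors.

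The main obstacle is the fixed-point-freeness in full: I must arrange $P$ and its $C_q$-action so that $\langle t\rangle$ acts without a fixed point on the set of \emph{all} complement classes, uniformly for an arbitrary nonabelian $N$ and arbitrary $q$, not merely on the one orbit coming from $\nu_0$. The danger is that extra (possibly $\langle t\rangle$-invariant) homomorphisms $\nu$ with $\nu(w)=z^{-1}$ --- for instance ``diagonal'' ones spread across several copies --- might appear, and ruling these out requires choosing $P$ rigidly enough that every such $\nu$ is forced to be supported on a single copy. I expect this to be the technical heart of the argument; by contrast the Schur--Zassenhaus reduction, the bijection between complements and the homomorphisms $\nu$, and the verification of (i)--(iii) are all formal.
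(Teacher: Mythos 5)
Your reduction framework is sound: by Schur--Zassenhaus and Wielandt's conjugacy theorem for nilpotent Hall subgroups (needed here, since $q$ may be composite, so ``conjugacy of Sylow $q$-subgroups'' does not suffice), $N$ has a complement in $G=H\rtimes\langle t\rangle$ if and only if some complement of $N$ in $H$ is $\langle t\rangle$-invariant, and complements of $N$ in the central product $H=N*P$ do correspond bijectively to homomorphisms $\nu\colon P\to N$ with $\nu(w)=z^{-1}$. But the proposal has a genuine gap, which you acknowledge yourself: you never construct $P$, its $C_q$-action, or the fixed-point-freeness; you only express the hope that a ``rigid enough'' $P$ exists. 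That step is the entire content of the theorem. Moreover, two of your guiding ideas point in the wrong direction. First, your recipe for producing $\nu_0$ (realize $z$ as a product of commutators in $N$ and build $P$ on that data) cannot work as stated: the entries of those commutators need not be $p$-elements, whereas every homomorphic image of the $p$-group $P$ in $N$ consists of $p$-elements. What you actually need is the Taunt/Huppert result quoted in the paper, namely $\Z(N)\cap N'\cap Q\le Q'$ for $Q\in\Syl_p(N)$, which realizes $z$ as a product of commutators of elements of a Sylow $p$-subgroup. Second, ``rigidity'' (forcing every $\nu$ to be supported on a single copy) is not the right goal: $\tau$-invariant homomorphisms $P\to N$ always exist (the trivial one, diagonal ones) and are harmless; the only thing to rule out is an \emph{invariant} $\nu$ with $\nu(w)=z^{-1}$. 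Since every invariant $\nu$ vanishes on $[P,\tau]:=\langle x^{-1}\tau(x):x\in P\rangle$, the correct criterion is simply $w\in[P,\tau]$.

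Once this is seen, your scheme can be completed, and precisely by the computation that is the heart of the paper's proof. The paper works with $D:=N^q$ and the cyclic shift $\alpha$ rather than an auxiliary $p$-group: it forms $G:=(N\times(D\rtimes\langle\alpha\rangle))/\langle(z,\overline{z})\rangle$ and $H:=ND$, and for a complement $L$ with $\alpha\in L$ shows $\alpha(d)d^{-1}\in L$ for all $d\in D$; the commutator identity \eqref{comminL} then forces $(z,1,\ldots,1)\in L$, hence $\overline{z}\in L\cap N$, a contradiction. In your setting the analogue is: take $Q\in\Syl_p(N)$ with $z\in\Z(Q)\cap Q'$, put $P:=Q^q$, let $\tau$ be the shift, $w:=(z^{-1},\ldots,z^{-1})\in\Z(P)\cap P'$, and let $\nu_0$ be projection onto the first coordinate followed by inclusion. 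The identity \eqref{comminL}, applied inside $Q^q$, gives $([x,y],1,\ldots,1)\in[P,\tau]$ for all $x,y\in Q$, hence $(z^{-1},1,\ldots,1)\in[P,\tau]$; since $[P,\tau]$ is $\tau$-invariant, also $w\in[P,\tau]$, so no invariant $\nu$ can send $w$ to $z^{-1}\ne 1$. Without this computation (or some substitute for it), your proposal does not prove \autoref{ZNthm}.
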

\begin{proof}
Let $1\ne Z=\langle z\rangle\le\Z(N)\cap N'$. Let $\alpha$ be the automorphism of $D:=N^q=N\times\ldots\times N$ such that $\alpha(x_1,\ldots,x_q)=(x_q,x_1,\ldots,x_{q-1})$ for all $(x_1,\ldots,x_q)\in D$. Then $W:=D\rtimes\langle\alpha\rangle\cong N\wr C_q$ and $\overline{z}:=(z,\ldots,z)\in\Z(W)$. Hence, we can construct the central product
\[G:=(N\times W)/\langle(z,\overline{z})\rangle\cong N*W.\]
We identify $N$, $D$ and $W$ with their images in $G$. In this sense, $N\cap W=N\cap D=\langle \overline{z}\rangle=\langle z^{-1}\rangle$. 
Now $H:=ND\unlhd G$ has the same composition factors as $N$ and $G/H\cong C_q$. 
Consider 
\[K:=\{x_1(x_1,\ldots,x_q):(x_1,\ldots,x_q)\in D\}\le H.\] 
Clearly, $H=NK$. For $g=x_1(x_1,\ldots,x_q)\in K\cap N$ we must have $(x_1,\ldots,x_q)\in\langle\overline{z}\rangle$ and therefore $g=1$. Hence, $K$ is a complement of $N$ in $H$.

Suppose by way of contradiction that $N$ has a complement $L$ in $G$. Note that $\langle\alpha\rangle$ is a nilpotent Hall subgroup of $G$. A theorem of Wielandt asserts that every Hall subgroup of order $q$ is conjugate to $\langle\alpha\rangle$ (see \cite[Satz~III.5.8]{Huppert}; if $q$ is a prime, Sylow's theorem suffices).
Since every conjugate of $L$ in $G$ is also a complement of $N$, we may assume that $\alpha\in L$. It follows that $L\cap H$ is an $\alpha$-invariant complement of $N$ in $H$. 
For every $d\in D$ there exists $x\in N$ such that $xd\in L$. Consequently, $\alpha(d)d^{-1}=\alpha(xd)(xd)^{-1}\in L$. 
In particular, $(x,x^{-1},1,1\ldots,1)\in L$ for all $x\in N$. For $x,y\in N$ we compute
\begin{equation}\label{comminL}
([x,y],1,\ldots,1)=(x,x^{-1},1,\ldots,1)(y,y^{-1},1,\ldots,1)((yx)^{-1},yx,1,\ldots,1)\in L.
\end{equation}
Since $z\in N'$, we conclude that $(z,1,\ldots,1)\in L$. But now also 
\[\overline{z}=(z,1,\ldots,1)\alpha(z,1,\ldots,1)\ldots\alpha^{q-1}(z,1,\ldots,1)\in L\cap N.\]
This contradicts $L\cap N=1$. 
\end{proof}

\begin{Cor}
Gaschütz' theorem fails for all non-abelian nilpotent groups. 
\end{Cor}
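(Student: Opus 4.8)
The plan is to deduce this at once from \autoref{ZNthm}: that theorem already hands us the witnessing groups $N\le H\le G$ as soon as we know $\Z(N)\cap N'\ne 1$, so the entire task reduces to checking that every non-abelian nilpotent group $N$ satisfies this one hypothesis. First I would record that a non-abelian nilpotent group has nilpotency class $c\ge 2$, and then extract the required nontrivial element of $\Z(N)\cap N'$ from the bottom of the lower central series.

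Concretely, set $\gamma_1(N)=N$ and $\gamma_{i+1}(N)=[\gamma_i(N),N]$, so that $\gamma_c(N)\ne 1$ while $\gamma_{c+1}(N)=[\gamma_c(N),N]=1$. The last equality says precisely that every element of $\gamma_c(N)$ commutes with all of $N$, i.e. $\gamma_c(N)\le\Z(N)$. Since the series is decreasing and $c\ge 2$, we also have $\gamma_c(N)\le\gamma_2(N)=N'$. Combining these, $1\ne\gamma_c(N)\le N'\cap\Z(N)$, which is exactly the hypothesis of \autoref{ZNthm}. Applying that theorem (with any $q>1$ coprime to $|N|$) then produces groups exhibiting the failure of Gaschütz' theorem for $N$, which is what the corollary asserts.

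There is essentially no serious obstacle here; the only point that deserves a word is the inclusion $\gamma_c(N)\le\Z(N)$, and this is just the definition of the nilpotency class. If one prefers to avoid the lower central series entirely, an equivalent route is to decompose the finite nilpotent group $N$ as the direct product of its Sylow subgroups and to observe that $N'\cap\Z(N)=\prod_p\bigl(P_p'\cap\Z(P_p)\bigr)$; since $N$ is non-abelian, some Sylow subgroup $P_p$ is non-abelian, and for a single non-abelian $p$-group the inequality $P_p'\cap\Z(P_p)\ne 1$ again follows from the same central-commutator argument. I expect the lower central series version to be the cleaner and shorter presentation, so I would use that one in the final write-up.
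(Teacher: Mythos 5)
Your proposal is correct and takes essentially the same route as the paper: both deduce the corollary from \autoref{ZNthm} by verifying that $\Z(N)\cap N'\ne 1$ for every non-abelian nilpotent group $N$. The paper simply cites Huppert (Satz~III.2.6) for this fact, whereas you prove it directly from the lower central series ($1\ne\gamma_c(N)\le \Z(N)\cap N'$ when the class is $c\ge 2$), which is a correct and self-contained substitute for the citation.
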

\begin{proof}
See \cite[Satz~III.2.6]{Huppert} for instance.
\end{proof}

\begin{Cor}
If $N$ is metabelian, then Gaschütz' theorem holds for $N$ if and only if $N'\cap\Z(N)=1$.
\end{Cor}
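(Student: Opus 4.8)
The plan is to deduce this equivalence directly from the two packaged results that have already been established, so that no fresh argument is required beyond matching up the hypotheses. Since one implication is exactly \autoref{metabel} and the other is the contrapositive of \autoref{ZNthm}, the corollary should fall out immediately; the only thing to verify is that the stated conditions line up, keeping in mind that $N'\cap\Z(N)$ and $\Z(N)\cap N'$ denote the same subgroup.

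For the direction ``Gaschütz' theorem holds for $N$ $\Rightarrow N'\cap\Z(N)=1$'', I would argue by contraposition. If $N'\cap\Z(N)\ne 1$, then \autoref{ZNthm} furnishes (for any suitable $q$) an explicit embedding $N\le H\le G$ with $N\unlhd G$, $\gcd(|N|,|G:H|)=1$, and $N$ having a complement in $H$ but not in $G$. This exhibits a failure of Gaschütz' theorem for $N$. Hence, if Gaschütz' theorem does hold, we must have $N'\cap\Z(N)=1$. It is worth noting that this implication uses nothing about $N$ being metabelian; \autoref{ZNthm} applies to arbitrary finite $N$.

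For the converse direction, I would simply invoke \autoref{metabel}: assuming $N$ metabelian and $N'\cap\Z(N)=1$, that theorem asserts precisely that Gaschütz' theorem holds for $N$. Here the metabelian hypothesis is genuinely needed, as the proof of \autoref{metabel} splits off $N'$ via \autoref{yonaha}, which requires $N''=1$. I expect no real obstacle in this corollary, since all of the substantive work is carried by \autoref{metabel} and \autoref{ZNthm}; the proof is merely their combination.
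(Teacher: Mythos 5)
Your proposal is correct and matches the paper's intended argument exactly: the ``if'' direction is \autoref{metabel}, and the ``only if'' direction is the contrapositive of \autoref{ZNthm}, which the paper leaves implicit (its one-line proof cites only \autoref{metabel}, relying on the corollary's placement immediately after \autoref{ZNthm}). Your version simply makes both halves explicit, including the correct observation that the \autoref{ZNthm} direction needs no metabelian hypothesis.
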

\begin{proof}
This follows from \autoref{metabel}.
\end{proof}

We illustrate that the condition $\Z(N)\cap N'=1$ (even $\Z(N)=1$) is not sufficient for Gaschütz' theorem in general. 
A given counterexample $N\unlhd H\le G$ to Gaschütz' theorem can be “blown up” as follows. 
Let $L$ be a finite group such that $\gcd(|L|,|G:H|)=1$ (this is a harmless restriction in the situation of \autoref{ZNthm}). 
To an arbitrary homomorphism $G\to\Aut(L)$, we form the semidirect products $\hat G:=L\rtimes G$, $\hat H:=L\rtimes H$ and $\hat N:=L\rtimes N$. If $K$ is a complement of $N$ in $H$, then $K$ is also a complement of $\hat N$ in $\hat H$. Now suppose that $\hat K$ is a complement of $\hat N$ in $\hat G$. Then $\hat KL/L$ a complement of $\hat N/L\cong N$ in $\hat G/L\cong G$. Contradiction. Hence, $\hat N\unlhd \hat H\le\hat G$ is a counterexample to Gaschütz' theorem. 

The counterexample $\SL(2,3)*C_4$ mentioned in the introduction lives inside $\GL(2,5)$. Therefore, Gaschütz' theorem does not hold for the Frobenius group $N=C_5^2\rtimes Q_8$. Indeed, $\Z(N)=1$. In contrast, Gaschütz' theorem does hold the very similar groups $C_5^2\rtimes D_8$ and $C_3^2\rtimes Q_8$, because those fulfill Rose's criterion. So we see that the question for an individual group can be very delicate to answer. 

Other examples arise from our next theorem, which is related to Rose's result as well.

\begin{Thm}\label{perfect}
Let $N$ be a perfect group with trivial center. Then Gaschütz' theorem holds for $N$ if and only if $\Inn(N)$ has a complement in $\Aut(N)$. 
\end{Thm}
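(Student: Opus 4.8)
The plan is to prove the two implications separately, with Rose's theorem (\autoref{rose}) doing most of the work.

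The backward implication is immediate. Assume $\Inn(N)$ has a complement in $\Aut(N)$. Since $\Z(N)=1$ by hypothesis, this is exactly condition (1) of \autoref{rose}, so condition (2) holds: $N$ has a complement in \emph{every} finite group $G$ with $N\unlhd G$. In particular the conclusion of Gaschütz' theorem holds whenever its hypotheses are met (indeed it holds unconditionally), so Gaschütz' theorem holds for $N$. This direction needs nothing beyond \autoref{rose}.

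For the forward implication I would argue by contraposition: assuming $\Inn(N)$ has \emph{no} complement in $\Aut(N)$, I would manufacture a counterexample to Gaschütz' theorem. The reduction that makes this tractable is that $\Z(N)=1$ rigidifies the extension theory of $N$. For any finite group $\mathcal{Q}$ and any homomorphism $\psi\colon\mathcal{Q}\to\Out(N)$ there is a (unique, since $\cohom^2(\mathcal{Q},\Z(N))=0$) group $G$ with $N\unlhd G$ and $G/N\cong\mathcal{Q}$ realizing $\psi$, and $N$ has a complement in $G$ if and only if $\psi$ lifts along $\rho\colon\Aut(N)\to\Out(N)$. Under this dictionary a Gaschütz counterexample is precisely a pair $(\mathcal{Q},\psi)$ together with a subgroup $\mathcal{S}\le\mathcal{Q}$ of index coprime to $|N|$ such that $\psi|_{\mathcal{S}}$ lifts but $\psi$ itself does not; one then takes $G$ as above and $H$ the preimage of $\mathcal{S}$. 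By hypothesis the tautological case $\mathcal{Q}=\Out(N)$, $\psi=\id$ fails to lift, so the task is to spread this obstruction out over a coprime extension.

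The construction I would attempt is as follows. Fix a prime $q\nmid|N|$ and choose a subgroup $\mathcal{S}\le\Aut(N)$ so that $\psi_0:=\rho|_{\mathcal{S}}$ lifts tautologically via the inclusion $\mathcal{S}\hookrightarrow\Aut(N)$ while $\rho(\mathcal{S})$ still carries the non-splitting. I would then seek an automorphism $\tau$ of $\mathcal{S}$ of order $q$ fixing $\psi_0$ (that is $\psi_0\circ\tau=\psi_0$), set $\mathcal{Q}:=\mathcal{S}\rtimes_{\tau}\langle t\rangle$ with $\ord(t)=q$, and extend $\psi_0$ to $\psi\colon\mathcal{Q}\to\Out(N)$ by $\psi(t)=1$. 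Then $\mathcal{S}$ has index $q$ coprime to $|N|$ and $\psi|_{\mathcal{S}}$ lifts, so it remains to see that $\psi$ does not lift. The role of the coprime prime is that in any lift $\phi\colon\mathcal{Q}\to\Aut(N)$ the element $t$ must map into $\Inn(N)$ (because $\psi(t)=1$), so conjugation by $\phi(t)$ would have to induce the order-$q$ automorphism $\tau$ on the lifted copy of $\mathcal{S}$; but an inner automorphism of $N$ has order dividing $|N|$, which is coprime to $q$, and therefore cannot realise a nontrivial action of order $q$. This forces $\phi(t)$ to centralise $\phi(\mathcal{S})$, contradicting $\tau\ne\id$.

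The main obstacle is exactly the existence of a suitable pair $(\mathcal{S},\tau)$, together with ruling out \emph{every} lift of $\psi$ rather than just the tautological one: different lifts differ by twists $\mathcal{S}\to\Inn(N)$, and one must check that the induced action of $t$ remains of order $q$ for all of them. When $\Out(N)$ has a prime divisor not dividing $|N|$ this is easy, since the coprime prime is already present inside $\Out(N)$ and one may take $\mathcal{S}$ to be the preimage of a Hall subgroup. The genuinely delicate case is when every prime dividing $|\Out(N)|$ also divides $|N|$; there the coprime prime $q$ must be imported from outside and made to act on a lifting subgroup while fixing the map to $\Out(N)$, and I expect this to be where the real effort lies.
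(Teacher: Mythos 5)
Your backward implication is correct and coincides with the paper's: since $\Z(N)=1$, a complement of $\Inn(N)$ in $\Aut(N)$ is exactly Rose's condition (1), and \autoref{rose} gives a complement of $N$ in \emph{every} overgroup. Your dictionary for the forward direction is also sound --- for centerless $N$, extensions with quotient $\mathcal{Q}$ correspond bijectively to homomorphisms $\psi\colon\mathcal{Q}\to\Out(N)$, and splitting is equivalent to lifting $\psi$ through $\rho\colon\Aut(N)\to\Out(N)$ --- and it would even buy a small simplification: any lift sends $t$ to an element of $\Inn(N)\cong N$ of order dividing $q$, hence to $1$, so the paper's appeal to Wielandt's theorem on nilpotent Hall subgroups becomes unnecessary. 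But the forward implication itself is not proved. First, the concluding contradiction of your sketch is invalid: from $\phi(t)=1$ you only obtain $\phi(\tau(s))=\phi(s)$ for all $s\in\mathcal{S}$, which contradicts $\tau\ne\id$ only if $\phi|_{\mathcal{S}}$ is injective, and a lift of $\psi_0$ may have a kernel. Indeed, since $\tau$ fixes $\psi_0$, every element $\tau(s)s^{-1}$ lies in $\Ker\rho=\Inn(N)$, i.e.\ $\tau$ moves $\mathcal{S}$ only in the directions in which lifts are allowed to differ from the inclusion, so the dangerous lifts are precisely the ones your argument does not touch. Second, no pair $(\mathcal{S},\tau)$ is ever constructed: you explicitly defer this (``where the real effort lies''), and even your ``easy case'' is unjustified, because the restriction of $\id_{\Out(N)}$ to (the preimage of) a Hall subgroup need not lift either, so the complement required over $H$ is not guaranteed. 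Since producing the counterexample is the entire content of this implication, this is a genuine gap, not a routine verification.

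The idea you are missing --- and which can be phrased inside your dictionary --- is not to look for $\mathcal{S}$ inside $\Aut(N)$ at all, but to inflate it by $q$ dummy copies of $N$ mapping trivially to $\Out(N)$. The paper fixes $q>1$ coprime to $|\Aut(N)|$, sets $D:=N^q$ with cyclic shift $\alpha$, and works inside $\Aut(N^{q+1})$: it takes $G:=NWA$ with $W:=D\rtimes\langle\alpha\rangle$ and $A$ the diagonal copy of $\Aut(N)$, and $H:=NDA$ of index $q$. Here the analogue of your $\psi_0$ lifts because the twisted diagonal $\{x_1(x_1,\ldots,x_q):x_i\in N\}\,A$ complements $N$ in $H$, and the analogue of your $\tau$ is conjugation by $\alpha$. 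The dummy copies are what kill every putative complement $L$ of $N$ in $G$: after arranging $\alpha\in L$, the elements $\alpha(d)d^{-1}\in L$ yield, via the commutator identity \eqref{comminL}, the containment $D'\le L$, and now perfectness enters decisively: $D'=D$, hence $\C_G(N)=D\langle\alpha\rangle\le L$, and $L/\C_G(N)$ would be a complement of $\Inn(N)$ in $G/\C_G(N)\cong\Aut(N)$, a contradiction. Note that your sketch nowhere uses $N'=N$; that is a warning sign, since \autoref{special} shows that without perfectness such a construction needs genuinely extra hypotheses. Any completion of your approach will have to invoke perfectness at precisely the step you left open, namely ruling out the non-injective ($\Inn(N)$-twisted) lifts.
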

\begin{proof}
If $\Inn(N)$ has a complement in $\Aut(N)$, then the claim follows from \autoref{rose}. Now assume conversely that $\Inn(N)$ has no complement in $\Aut(N)$. We construct a counterexample similar as in \autoref{ZNthm}. Since $\Z(N)=1$, we will identify $N$ with $\Inn(N)$. Let $q>1$ be an integer coprime to $|\Aut(N)|$. Let $\alpha$ be the automorphism of $D:=N^q=N\times\ldots\times N$ such that $\alpha(x_1,\ldots,x_q)=(x_q,x_1,\ldots,x_{q-1})$ for all $(x_1,\ldots,x_q)\in D$. Then $W:=D\rtimes\langle\alpha\rangle\cong N\wr C_q$ is a subgroup of $\Aut(N\times D)\cong\Aut(N)\wr S_{q+1}$. Since the diagonal subgroup $A:=\langle(\gamma,\ldots,\gamma):\gamma\in\Aut(N)\rangle\le\Aut(N)^{q+1}$ is centralized by $\alpha$, we can define $G:=NWA$ and $H:=NDA\unlhd G$. As usual, we identify $N$, $D$, $W$ and $A$ with subgroups of $G$. We show that Gaschütz' theorem fails with respect to $N\le H\le G$.

Note first that $G/H\cong\langle\alpha\rangle\cong C_q$. As in the proof of \autoref{ZNthm}, it is easy to see that
\[\langle x_1(x_1,x_2,\ldots,x_q):x_1,\ldots,x_q\in N\rangle A\le H\]
is a complement of $N$ in $H$. Suppose by way of contradiction that $L\le G$ is a complement of $N$ in $G$. By Wielandt's theorem on nilpotent Hall subgroups, we may assume that $\alpha\in L$. The same computation as in \eqref{comminL} shows that $D'\le L$. Since $N'=N$ by hypothesis, it follows that $C:=\C_G(N)=D\langle\alpha\rangle\le L$. But now $L/C$ is a complement of $NC/C\cong N$ in $G/C\cong\Aut(N)$.
Contradiction.
\end{proof}

As promised earlier, \autoref{perfect} implies that Gaschütz' theorem does not hold for the alternating group $A_6$. 

\begin{Cor}
Let $N$ be a perfect group with trivial center such that $\Inn(N)$ has no complement in $\Aut(N)$. Then for every finite group $M$, Gaschütz' theorem does not hold for $N\times M$.
\end{Cor}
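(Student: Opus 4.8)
The plan is to combine \autoref{perfect} with the blow-up construction described just before it, taking a \emph{direct} product with $M$ (i.e. a trivial action). Since $N$ is perfect and centerless with $\Inn(N)$ having no complement in $\Aut(N)$, \autoref{perfect} already furnishes a counterexample $N\le H\le G$ to Gaschütz' theorem for $N$ alone; moreover its proof produces such a counterexample with $|G:H|=q$ for \emph{any} integer $q>1$ coprime to $|\Aut(N)|$. The idea is to glue $M$ onto this configuration as a direct factor, so that $N\times M$ becomes the normal subgroup while the quotient by $M$ recovers the original obstruction for $N$.

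Concretely, first I would fix an integer $q>1$ coprime to $|\Aut(N)|\cdot|M|$ and invoke \autoref{perfect} to obtain $N\le H\le G$ with $N\unlhd G$, $|G:H|=q$, a complement $K$ of $N$ in $H$, but no complement of $N$ in $G$. Then I would apply the blow-up construction with $L:=M$ and the trivial homomorphism $G\to\Aut(M)$. Because the action is trivial, the semidirect products collapse to direct products $\hat G=M\times G$, $\hat H=M\times H$ and $\hat N=M\times N$, so that $\hat N\cong N\times M$ is precisely the group of interest and $\hat N\unlhd\hat H\le\hat G$ with $|\hat G:\hat H|=|G:H|=q$.

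Next I would verify that this is a genuine counterexample. The coprimality hypothesis of the blow-up, $\gcd(|M|,|G:H|)=\gcd(|M|,q)=1$, holds by the choice of $q$; and since $N$ is centerless we have $\Inn(N)\cong N$, so $|N|$ divides $|\Aut(N)|$ and hence $q$ is coprime to $|N|$ as well. Thus $\gcd(|\hat N|,|\hat G:\hat H|)=\gcd(|M|\,|N|,q)=1$, as required. The complement $K$ of $N$ in $H$ remains a complement of $\hat N$ in $\hat H$ (one has $\hat N K=M\times NK=M\times H=\hat H$ and $\hat N\cap K=N\cap K=1$). Finally, if $\hat N$ had a complement $\hat K$ in $\hat G$, then $\hat K M/M$ would be a complement of $\hat N/M\cong N$ in $\hat G/M\cong G$, contradicting the non-existence of a complement of $N$ in $G$. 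Hence Gaschütz' theorem fails for $\hat N\cong N\times M$.

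The argument is essentially routine once the two ingredients are in place, so I do not expect a serious obstacle; the only point requiring care is the arithmetic. One must ensure that the freedom in choosing $q$ in \autoref{perfect} permits $q$ to be taken coprime to $|M|$ simultaneously, and that $|N|$ divides $|\Aut(N)|$ (which is exactly where the centerlessness of $N$ enters) so that $\gcd(|\hat N|,q)=1$ and the blow-up machinery genuinely applies.
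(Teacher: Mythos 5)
Your proposal is correct and follows essentially the same route as the paper: the paper's proof also takes the counterexample $N\le H\le G$ from \autoref{perfect} with $q$ chosen coprime to $|M|$ and passes to $N\times M\le H\times M\le G\times M$, which is exactly your blow-up with $L=M$ and trivial action. Your extra verifications (that $K$ remains a complement, that a complement of $N\times M$ in $G\times M$ would descend modulo $M$ to a complement of $N$ in $G$, and that $|N|$ divides $|\Aut(N)|$ by centerlessness so the coprimality holds) are precisely the details the paper leaves implicit.
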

\begin{proof}
Let $N\le H\le G$ be the counterexample for $N$ constructed in the proof of \autoref{perfect} with $q$ coprime to $|M|$. Then $N\times M\le H\times M\le G\times M$ is a counterexample to Gaschütz' theorem for $N\times M$. 
\end{proof}

An easy variant of \autoref{perfect} yields the following more technical criterion.

\begin{Prop}\label{special}
Let $N$ be a finite group with $\Z(N)=1$. Suppose that there exist $k\in\NN$ and an automorphism $\gamma\in\Aut(N)$ such that $\gamma^k\in\Inn(N)'$ and $(\delta\gamma)^k\ne 1$ for all $\delta\in\Inn(N)$. Then Gaschütz' theorem does not hold for $N$.
\end{Prop}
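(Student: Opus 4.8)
The plan is to reuse the construction from the proof of \autoref{perfect}, but to replace the diagonal copy of $\Aut(N)$ by the diagonal copy of $\langle\gamma\rangle$ and to read off the contradiction directly from $\gamma$ instead of from perfectness. Since $\Z(N)=1$ I identify $N$ with $\Inn(N)$ and write $\gamma^k=\iota_w$ with $w\in N'$ (legitimate because $\Inn(N)'=\{\iota_v:v\in N'\}$). I fix an integer $q>1$ coprime to $|\Aut(N)|$, and inside $\Aut(N^{q+1})$ (factors labelled $0,1,\ldots,q$) I let $N$ be the inner automorphisms of factor $0$, $D=N^q$ the inner automorphisms of factors $1,\ldots,q$, $\alpha$ the cyclic shift of the factors $1,\ldots,q$, and $\hat\gamma=(\gamma,\ldots,\gamma)$ the diagonal action of $\gamma$. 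Then $\alpha$ and $\hat\gamma$ commute, $W:=D\rtimes\langle\alpha\rangle\cong N\wr C_q$, and I set $G:=NW\langle\hat\gamma\rangle$ and $H:=ND\langle\hat\gamma\rangle\unlhd G$.

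First I would record the structural facts, all verified by the same support/faithfulness arguments as in \autoref{perfect}: $G/H\cong C_q$, $\gcd(q,|N|)=1$, and $N$ has a complement in $H$, namely the subgroup of $N\times D$ on which factor $0$ equals factor $1$, extended by $\langle\hat\gamma\rangle$. These checks are routine once one notes $N\cap W=1$ and $\langle\hat\gamma\rangle\cap W=1$.

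For the nonexistence of a complement in $G$, suppose $L$ were one. As $\langle\alpha\rangle$ is a nilpotent Hall subgroup (here the coprimality of $q$ with $|\Aut(N)|$ is used), Wielandt's theorem lets me conjugate $L$ so that $\alpha\in L$; the commutator computation \eqref{comminL} then yields $D'=(N')^q\le L$. The decisive new step is the following. Writing $\hat\gamma=n_0\ell_0$ with $n_0\in N$ and $\ell_0\in L$ (possible since $G=NL$), I would compute
\[
\ell_0^k=(n_0^{-1}\hat\gamma)^k=u\,\hat\gamma^k,\qquad u:=n_0^{-1}\gamma(n_0^{-1})\cdots\gamma^{k-1}(n_0^{-1})\in N.
\]
Now $\hat\gamma^k$ is the inner automorphism by $(w,\ldots,w)$, so it splits as the factor-$0$ element $w$ times the element $(w,\ldots,w)\in D$. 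Because $w\in N'$, this $D$-part lies in $D'\le L$, whence $uw=\ell_0^k\,(w,\ldots,w)^{-1}\in L\cap N=1$. Thus $u=w^{-1}$, which says precisely that $(\iota_{n_0^{-1}}\gamma)^k=\iota_{uw}=1$, contradicting the hypothesis $(\delta\gamma)^k\ne1$ for all $\delta\in\Inn(N)$.

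The main obstacle is exactly the point where \autoref{perfect} invoked $N'=N$: there one gets $D\le L$ and hence $\C_G(N)\le L$, but in the present generality only the weaker containment $D'\le L$ is available, so I cannot pass to the quotient $G/\C_G(N)\cong\langle\Inn(N),\gamma\rangle$ directly. I expect the crux to be the realization that this weaker containment already suffices, because the hypothesis $\gamma^k\in\Inn(N)'$ is tailored to force the troublesome $D$-component $(w,\ldots,w)$ of $\hat\gamma^k$ into $D'$ rather than merely into $D$. Once this is seen, the Hall/Wielandt reduction and the construction of the complement in $H$ are straightforward transcriptions of the corresponding parts of \autoref{perfect}.
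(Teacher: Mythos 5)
Your proposal is correct and takes essentially the same route as the paper's proof: conjugate $L$ via Wielandt's theorem so that $\alpha\in L$, use the commutator computation \eqref{comminL} to get $D'\le L$, and then raise an element $\delta(\gamma,\ldots,\gamma)\in L$ to the $k$-th power so that the hypothesis $\gamma^k\in\Inn(N)'$ forces the $D$-component into $D'\le L$ and hence $(\delta\gamma)^k\in N\cap L=1$, a contradiction. The only (harmless) difference is that you build $G=NW\langle\hat\gamma\rangle$ from the cyclic diagonal $\langle\hat\gamma\rangle$ instead of the paper's $G=NWA$ with the full diagonal $A\cong\Aut(N)$; since the paper's argument only ever uses the single element $(\gamma,\ldots,\gamma)\in A$, the two proofs coincide in substance.
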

\begin{proof}
Let $G=NWA$ be the group constructed in the proof of \autoref{perfect} (this does not require $N'=N$). Suppose that $L$ is a complement of $N$ in $G$. Then $D'\le L$ as shown by a computation as in \eqref{comminL}. Since $(\gamma,\ldots,\gamma)\in A\le G=NL$, there exists $\delta\in N$ such that $\delta(\gamma,\ldots,\gamma)\in L$. It follows that $(\delta\gamma)^k(\gamma^k,\ldots,\gamma^k)\in L$. Since $\gamma^k\in N'$, we obtain $(\delta\gamma)^k\in N\cap L=1$. Contradiction.
\end{proof}

\autoref{special} applies for instance to the non-perfect group $N=S_3\wr C_2$ (here $\Aut(N)\cong C_3^2\rtimes SD_{16}$ where $SD_{16}$ is the semidihedral group of order $16$). 

Since there is a gap between the existence theorems in \autoref{sec2} and the non-existence theorem above, it is of interest to look at small examples. Using GAP~\cite{GAP48}, we were able to decide for every group of order less than $144$ whether Gaschütz' theorem holds. We put the first open case as a problem for future research.

\begin{Prob}
Let $N:=(C_3^2\rtimes Q_8)\times C_2=\mathtt{SmallGroup}(144,187)$ with $|\Z(N)|=2$. Decide whether or not Gaschütz' theorem holds for $N$.
\end{Prob}

\section*{Acknowledgment}
\autoref{prop}\eqref{three} was found by Scheima Obeidi within the framework of her Master's thesis written under the direction of the author. I thank Stefanos Aivazidis for some comments on Yonaha's theorem.
I appreciate some valuable comments of an anonymous referee.
The work is supported by the German Research Foundation (\mbox{SA 2864/1-2} and \mbox{SA 2864/4-1}).

\end{document}